\documentclass[10pt, a4paper]{amsart}
\usepackage{amssymb,amsmath,amsthm,latexsym,amscd,amsfonts,mathrsfs}
\usepackage{graphicx}
\usepackage[utf8]{inputenc}


\newcommand{\im}{\text{\rm Im\,}}

\newcommand{\bd}{{\mathbb{D}}}
\newcommand{\bn}{{\mathbb{N}}}

\newcommand{\bz}{{\mathbb{Z}}}
\newcommand{\bc}{{\mathbb{C}}}

\newcommand{\bt}{{\mathbb{T}}}

\newcommand{\css}{{\mathcal{S}}}
\newcommand{\ccs}{{\mathcal{C}}}


\renewcommand{\l}{\lambda}
\newcommand{\s}{\sigma}

\renewcommand{\ss}{\Sigma}

\newcommand{\p}{\varphi}

\renewcommand{\th}{\theta}
\renewcommand{\d}{\delta}
\renewcommand{\dh}{{\hat\delta}}

\newcommand{\dd}{\Delta}
\renewcommand{\o}{\omega}

\newcommand{\g}{\gamma}
\newcommand{\gi}{{1/\gamma}}
\newcommand{\go}{{\gamma/(1+\gamma)}}
\newcommand{\gga}{\Gamma}
\newcommand{\ep}{\epsilon}

\newcommand{\nno}{{n=1,\dots,\infty}}
\newcommand{\tn}{{\tilde n}}
\newcommand{\po}{{\p_0}}
\newcommand{\pon}{{\p_1}}

\newcommand{\sg}{{\Sigma_\g}}
\newcommand{\sgo}{{\Sigma^0_\g}}
\newcommand{\ddg}{{\Delta_\g}}
\newcommand{\dg}{{\delta_\g}}
\newcommand{\tc}{{\tilde\chi}}
\newcommand{\tcso}{{\tilde\chi_{S_1}}}
\newcommand{\tcsl}{{\tilde\chi_{S_L}}}

\newcommand{\shl}{{\hat S_L}}
\newcommand{\sho}{{\hat S_1}}
\newcommand{\cho}{{\chi_{00}}}
\newcommand{\chn}{{\chi_{01}}}
\newcommand{\tpo}{{T_{\p^{1/2}_0}}}
\newcommand{\tpos}{{T^*_{\p^{1/2}_0}}}
\newcommand{\tpn}{{T_{\p_1}}}

\newcommand{\nn}{\nonumber}
\newcommand{\nt}{\noindent}

\newcommand{\pt}{\partial}
\newcommand{\ti}{\tilde}

\newcommand{\lp}{\left(}
\newcommand{\lpp}{{\big(}}
\newcommand{\rp}{\right)}
\newcommand{\rpp}{{\big)}}
\newcommand{\lt}{\left}
\newcommand{\rt}{\right}

\newcommand{\lsup}{{\limsup_{s\to 0+}\, }}
\newcommand{\linf}{{\liminf_{s\to0+}\, }}
\newcommand{\lni}{{L^\infty(\bt)}}

\newcommand{\diag}{{\mathrm{diag}\, }}


\renewcommand{\css}{\mathcal{S}}

\newcommand{\ie}{{\emph{i.e., }}}

\allowdisplaybreaks
\numberwithin{equation}{section}

\newtheorem{theorem}{Theorem}[section]

\newtheorem{lemma}[theorem]{Lemma}
\newtheorem{corollary}[theorem]{Corollary}
\newtheorem{proposition}[theorem]{Proposition}

\begin{document}

\title[Toeplitz operators on Bergman space]
{On spectral properties of compact Toeplitz operators on Bergman space with logarithmically decaying symbol  and applications to banded matrices}

\author{M. Koïta}
\address{Institut Universitaire de Formation Professionnelle, Université de Ségou, B.P. 97, Ségou, Mali}
\email{kmahamet@yahoo.fr}

\author{S. Kupin}
\address{Institut de Mathématiques de Bordeaux UMR5251, CNRS, Université de Bordeaux, 351 ave. de la Libération, 33405 Talence Cedex, France}
\email{skupin@math.u-bordeaux.fr}

\author{S. Naboko}
\address{Physics Institute, St. Petersburg State University, Ulyanovskaya str. 1, St. Peterhof, St. Petersburg, 198-904 Russia}
\email{sergey.naboko@gmail.com}

\author{B. Touré}
\address{Institut Universitaire de Formation Professionnelle, Université de Ségou, B.P. 97, Ségou, Mali}
\address{Faculté des Sciences et  des Techniques, Université des Sciences, des Techniques et des Technologies de Bamako, Campus Universitaire de Badalabougou à Bamako, B.P. E-3206, Bamako, Mali}
\email{vbelco@yahoo.fr}

\subjclass[2010]{Primary: 47B35; Secondary: 30H20, 42C10}

\begin{abstract}
Let $L^2(\bd)$ be the space of measurable square-summable functions on the unit disk. Let $L^2_a(\bd)$ be the Bergman space, \ie the (closed) subspace of analytic functions in $L^2(\bd)$. $P_+$ stays for the orthogonal projection going from $L^2(\bd)$ to $L^2_a(\bd)$. For a function $\p\in L^\infty(\bd)$, the Toeplitz operator $T_\p: L^2_a(\bd)\to L^2_a(\bd)$ is defined as
$$
T_\p f=P_+\p f, \quad f\in L^2_a(\bd).
$$
The main result of this article are spectral asymptotics for singular (or eigen-) values of compact Toeplitz operators with logarithmically decaying symbols, that is
$$
\p(z)=\p_1(e^{i\th})\, (1+\log(1/(1-r)))^{-\g},\quad \g>0,
$$
where $z=re^{i\th}$ and $\p_1$ is a continuous (or piece-wise continuous) function on the unit circle. The result is applied to the spectral analysis of banded (including Jacobi) matrices.
\end{abstract}

\keywords{compact Toeplitz operators, Bergman spaces, spectral asymptotics, generalized Schatten-von Neumann classes, special classes of compact operators with logarithmically decaying singular values}

\maketitle

\vspace{-0.7cm}
\section*{Introduction}\label{s0}
The investigation of Toeplitz operators is an important topic of modern analysis. The theory of Toeplitz operators on Hardy spaces was developed extensively in 70-90's of the last century, see monographs by Nikolski \cite{nk1,nk2} and Böttcher-Silbermann \cite{bo1} for a detailed account on the topic. The study of Toeplitz operators on larger functional spaces (\ie Fock, Bergman spaces, etc.) is currently under progress. 

In this article, we are interested in spectral asymptotics of singular values of a compact Toeplitz operator on the Bergman space with logarithmically decaying symbol. We need several definitions to give the formulations of our results.

Let $\bd:=\{z:|z|<1\}$ and $\bt:=\pt\bd=\{z : |z|=1\}$ be the unit disk and the unit circle, respectively. As usual, we denote by $L^2(\bd)=L^2(\bd, dA)$ the space of measurable square-summable functions on $\bd$ with respect to the normalized Lebesgue measure $dA(z):=dxdy/\pi, \ z\in\bd$. The Bergman space is defined as a (closed) subspace of analytic functions lying in $L^2(\bd)$, that is
\begin{equation}\label{e0011}
L^2_a(\bd):=L^2_a(\bd,dA)=\{f\in\mathcal{A}(\bd): ||f||^2_2=\int_\bd |f(z)|^2\, dA(z)<\infty\}.
\end{equation}
The Riesz orthogonal projection $P_+: L^2(\bd)\to L^2_a(\bd)$ is given by the integral operator
\begin{equation}\label{e0012}
(P_+f)(z):=\int_\bd \frac 1{(1-z\bar w)^2} f(w)\, dA(w),\quad f \in L^2(\bd).
\end{equation}
A detailed treatment of these and other objects pertaining to Bergman spaces can be found in Hedenmalm-Korenblum-Zhu \cite{hkz}, Zhu \cite[Chap. 4]{zh1}.

For a symbol $\p\in L^\infty(\bd)$, the corresponding Toeplitz operator is defined by the relation
\begin{equation}\label{e0013}
T_\p f:=P_+\p f, \quad f\in L^2_a(\bd).
\end{equation}
Many analytic  properties of these operators are very-well understood, see Zhu \cite[Chap. 7]{zh1} for a nice presentation of the subject. 
For instance, we have trivailly
$$
||T_\p||\le ||\p||_\infty,
$$
so the Toeplitz operator corresponding to a bounded symbol is also bounded. The compactness of $T_\p$ is related to the behavior of the symbol $\p$ on the vicinity of the unit circle $\bt$. The following proposition is not difficult to prove.
\begin{proposition}[{\cite[Prop. 7.3]{zh1}}]\label{p001} Let $\p\in C(\bar \bd)$, the class of contionuous functions on the closure of $\bd$. Then, $T_\p$ is compact if and only if
$$
\lim_{|z|\to 1-}\p(z)=0, \quad z\in\bd.
$$
\end{proposition}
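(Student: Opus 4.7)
The plan is to prove the two directions separately using standard Bergman-space techniques: a compact-support approximation for the sufficiency, and the method of normalized reproducing kernels together with the Berezin transform for the necessity.

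For the sufficiency, assuming $\p\in C(\bar\bd)$ with $\p\to 0$ at $\bt$, I would approximate $\p$ by the truncations $\p_r(w):=\p(w)\chi_{\{|w|\le r\}}(w)$, $0<r<1$. The hypothesis together with uniform continuity of $\p$ on $\bar\bd$ gives $\|\p-\p_r\|_\infty=\sup_{r<|w|<1}|\p(w)|\to 0$ as $r\to 1-$, hence $\|T_\p-T_{\p_r}\|\le\|\p-\p_r\|_\infty\to 0$. It remains to show that each $T_{\p_r}$ is compact. Writing out
$$
(T_{\p_r}f)(z)=\int_\bd\frac{\p_r(w)}{(1-z\bar w)^2}\, f(w)\, dA(w),
$$
the kernel $1/(1-z\bar w)^2$ is uniformly bounded on $\bd\times\{|w|\le r\}$, so the integral kernel is square-integrable on $\bd\times\bd$ and $T_{\p_r}$ is Hilbert--Schmidt, in particular compact. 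Hence $T_\p$ is a norm limit of compact operators.

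For the necessity, I would rely on the normalized reproducing kernels
$$
k_z(w):=\frac{1-|z|^2}{(1-\bar z w)^2},\quad z,w\in\bd,
$$
which satisfy $\|k_z\|_2=1$ and $\langle f,k_z\rangle=(1-|z|^2)f(z)$ for all $f\in L^2_a(\bd)$. Since $(1-|z|^2)p(z)\to 0$ as $|z|\to 1-$ for every polynomial $p$, the density of polynomials in $L^2_a(\bd)$ together with the uniform bound $\|k_z\|=1$ implies that $k_z\to 0$ weakly. Compactness of $T_\p$ then forces $\|T_\p k_z\|\to 0$, so the Berezin transform
$$
\ti\p(z):=\langle T_\p k_z,k_z\rangle=\int_\bd\p(w)\, \frac{(1-|z|^2)^2}{|1-\bar z w|^4}\, dA(w)
$$
tends to $0$ as $|z|\to 1-$.

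The main technical step, and the only place where continuity of $\p$ on $\bar\bd$ enters in this direction, is the asymptotic $\ti\p(z)-\p(z)\to 0$. It is equivalent to the statement that the family of probability measures $d\mu_z(w):=|k_z(w)|^2\, dA(w)$ concentrates at the boundary point $z/|z|$ as $|z|\to 1-$. I would check this by splitting $\bd$ into a small neighborhood of $z/|z|$ and its complement, using the normalization $\int_\bd |k_z|^2\, dA=1$ together with the elementary bound $|1-\bar z w|\ge c>0$ on the complement for $|z|$ close to $1$, which forces $\mu_z$ of the complement to vanish. Combined with $\ti\p(z)\to 0$, continuity of $\p$ on $\bar\bd$ then yields $\p(\zeta)=0$ for every $\zeta\in\bt$, and finally $\p(z)\to 0$ as $|z|\to 1-$ by continuity on $\bar\bd$.
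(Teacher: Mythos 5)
The paper gives no proof of this proposition at all — it simply cites Zhu \cite[Prop.~7.3]{zh1} — and your argument is correct and is essentially the standard one from that reference: Hilbert--Schmidt truncation of the symbol together with the bound $\|T_\psi\|\le\|\psi\|_\infty$ for sufficiency, and weak nullity of the normalized reproducing kernels $k_z$ plus the boundary asymptotics of the Berezin transform (which uses the concentration of $|k_z|^2\,dA$ at $z/|z|$, exactly as you describe) for necessity. One cosmetic remark: the final step is slightly roundabout, since once you have $\ti\p(z)\to 0$ and $\ti\p(z)-\p(z)\to 0$ as $|z|\to 1-$, the desired conclusion $\p(z)\to 0$ follows at once, with no need to first deduce that $\p$ vanishes on $\bt$.
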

For a positive symbol $\p$, criteria for $T_\p$ to be compact or to belong to Schatten-von Neumann classes $\css_p, \ 0<p<\infty$, can be found in \cite[Sect. 7.3]{zh1}.

Subsequent results in this direction address the spectral asymptotics (\ie the asymptotics of eigen- and/or singular values) of Toeplitz operators with symbols from some special classes.

Consider a symbol $\p$ of the following form
\begin{equation}\label{e003}
\p(z):=\p_1(e^{i\th})\p_0(r)=\p_1(e^{i\th})\, (1-r)^{\g},
\end{equation}
where $\p_1\in L^{1/\g}(\bt), \ \g>0$, and $z:=re^{i\th}\in\bd$. By Proposition \ref{p001}, the Toeplitz operator $T_\p$ is obviously compact. Its singular values $\{s_n(T_\p)\}_n$ form a decreasing sequence and converge to zero. 
An important result to come was obtained in Pushnitski \cite{pu1}.
\begin{theorem}[{\cite[Thm. 1.1]{pu1}}]\label{t001}
Consider the Toeplitz operator $T_\p$ with symbol defined in \eqref{e003}. Its singular values $\{s_n(T_\p)\}_n$ have the following asymptotics
\begin{eqnarray}\label{e004}
\lim_{n\to+\infty} n^\g s_n(T_\p)&=&\frac{\gga(\g+1)}{2^\g}||\p_1||_{L^\gi(\bt)}\\
&:=&\frac{\gga(\g+1)}{2^\g}\lp\int^{2\pi}_0 |\p_1(e^{i\th})|^\gi\, \frac{d\th}{2\pi}\rp^\g. \nn
\end{eqnarray}
Above, $\gga(.)$ is Euler gamma-function.
\end{theorem}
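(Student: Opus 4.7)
The plan is to write out the matrix of $T_\p$ in the monomial basis $e_n(z) = \sqrt{n+1}\, z^n$, separate the angular and radial contributions, compare the resulting infinite matrix with a ``weighted Toeplitz'' model, and then invoke a Szeg\H{o}-type limit theorem for that model. A direct computation gives
\begin{equation*}
\langle T_\p e_n, e_m\rangle
= \sqrt{(n+1)(m+1)}\, \widehat{\p_1}(m-n)\cdot 2\, B(\g+1, n+m+2),
\end{equation*}
an angular Fourier coefficient of $\p_1$ times a radial Beta integral. The Stirling ratio $\gga(n+m+2)/\gga(n+m+\g+3) \sim (n+m)^{-\g-1}$ then yields
\begin{equation*}
\langle T_\p e_n, e_m\rangle = \frac{\gga(\g+1)}{2^\g}\Big(\tfrac{n+m}{2}\Big)^{-\g}\, \widehat{\p_1}(m-n)\,(1+o(1))
\end{equation*}
as $n,m\to+\infty$ along any fixed diagonal $m-n=k$. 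This already encodes the constant $\gga(\g+1)/2^\g$ predicted by the theorem, and the purely radial case $\p_1\equiv 1$ reduces to the diagonal eigenvalue computation $\lambda_n = 2(n+1)B(\g+1,2n+2)\sim \gga(\g+1)/(2^\g n^\g)$, which serves as the benchmark.

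I would then introduce the model operator $A := c\, D L_{\p_1} D$ acting on $\ell^2(\bn)$, with $c = \gga(\g+1)/2^\g$, $D = \mathrm{diag}((n+1)^{-\g/2})$, and $L_{\p_1}$ the discrete Toeplitz matrix with entries $\widehat{\p_1}(m-n)$. By the preceding step the matrix of $T_\p - A$ has strictly smaller order on each diagonal; a Schur/Hilbert--Schmidt estimate places $T_\p - A$ in a Schatten class better than $\css_{1/\g,\infty}$, so by Weyl/Ky~Fan inequalities it contributes only a lower-order remainder to $s_n(T_\p)$. The theorem thus reduces to establishing $s_n(A) = c\,\|\p_1\|_{L^{1/\g}(\bt)}\,n^{-\g}(1+o(1))$ for the model.

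For this model asymptotic I would treat first the case of a trigonometric polynomial $\p_1^{(N)}$: then $L_{\p_1^{(N)}}$ is banded and $A$ decouples, modulo compact corrections, into a block-diagonal sum along the dyadic shells $\{2^k \le n < 2^{k+1}\}$. On each shell $D \simeq 2^{-k\g/2}\cdot\mathrm{Id}$, so the block is $2^{-k\g}$ times a finite Toeplitz matrix of size $\sim 2^k$ with symbol $\p_1^{(N)}$, whose singular values follow a Szeg\H{o}-type distribution governed by $|\p_1^{(N)}|$ on $\bt$. Assembling the contributions across the dyadic layers produces $\int_\bt|\p_1^{(N)}|^{1/\g}\,d\th/(2\pi) = \|\p_1^{(N)}\|_{L^{1/\g}}^{1/\g}$. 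One then passes from $\p_1^{(N)}$ to a general $\p_1 \in L^{1/\g}(\bt)$ via the continuity estimate $\|T_{(\p_1-\p_1^{(N)})\p_0}\|_{\css_{1/\g,\infty}}\le C\|\p_1-\p_1^{(N)}\|_{L^{1/\g}}$, i.e.\ the upper-bound half of the theorem applied to the difference; this is a self-improving step typical of such sharp asymptotics.

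The principal obstacle is controlling the off-diagonal blocks of the dyadic decomposition sharply enough to recover the \emph{exact} constant $\gga(\g+1)/2^\g$ rather than a mere order-of-magnitude bound. This forces one to work with Schatten-type estimates in the weak class $\css_{1/\g,\infty}$, which is only quasi-Banach, and to balance carefully the size against the weight of each block, so that both $\liminf_n n^\g s_n(T_\p)$ and $\limsup_n n^\g s_n(T_\p)$ can be shown to coincide with the limit claimed in the theorem.
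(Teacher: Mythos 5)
This statement is Pushnitski's theorem, which the paper quotes from \cite{pu1} without proof; the proof there (and its logarithmic echo in the paper's own proof of Theorem \ref{t1}) goes through angular localization of the symbol into arcs, asymptotic orthogonality of the localized operators $T_{\chi_{I_j}\p_0}$, an explicit diagonal computation for radial symbols, and a Ky-Fan-type perturbation plus step-function approximation. Your route is genuinely different: you localize in the degree index rather than in the angle, replacing $T_\p$ by the weighted-Toeplitz model $c\,DL_{\p_1}D$ via Beta-function asymptotics of the matrix elements, and extract the constant from a Szeg\H{o}/Avram--Parter count over shells. Your matrix-element computation and the radial benchmark (the analogue of the paper's Lemma \ref{l02}) are correct, and the heuristic bookkeeping does reproduce the constant $\gga(\g+1)2^{-\g}\|\p_1\|_{L^\gi(\bt)}$.

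However, as written the argument has two genuine gaps. First, dyadic shells are too coarse for the exact constant: across a shell $\{2^k\le n<2^{k+1}\}$ the weight $D$ varies by the fixed factor $2^{\g/2}$, so freezing it traps $\lim n^\g s_n$ only between $2^{-\g}$ and $2^{\g}$ times the target value; to recover \eqref{e004} you must pass to shells of ratio $1+\ep$, prove an Avram--Parter-type counting asymptotic for each weighted finite section with errors uniform enough to sum over shells inside the weak class, and then let $\ep\to0$. This is exactly the step you label the ``principal obstacle'' and do not carry out, so the proposal currently delivers an order-of-magnitude statement, not the limit. Second, the passage from trigonometric polynomials to general $\p_1\in L^\gi(\bt)$ invokes the bound $\|T_{(\p_1-\p_1^{(N)})\p_0}\|_{\css_{1/\g,\infty}}\le C\|\p_1-\p_1^{(N)}\|_{L^\gi(\bt)}$, which does not follow from the polynomial case or from anything else in your plan: it is essentially the upper-bound half of the theorem for arbitrary $L^\gi$ symbols and requires an independent argument (this is precisely what the localization machinery of \cite{pu1} provides; the paper's Lemma \ref{l2} is the analogous step in the logarithmic setting, but it is easier there because the relevant norm is $L^\infty(\bt)$, whereas here one needs a quasi-normed $L^\gi$ estimate, $\gi$ possibly less than $1$). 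A further minor point: for $\g>1$ a symbol $\p_1\in L^\gi(\bt)$ need not be integrable, so the Fourier coefficients $\widehat{\p_1}(k)$, and hence your model operator, only make sense after this approximation step has been secured.
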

The above result was obtained in \cite{pu1} in slightly more general form. Defining the counting function $n(.,T_\p)$ for the sequence of singular numbers of $T_\p$ as 
$$
n(s,T_\p):=\#\{n: s_n(T_\p)>s\}, \ s>0, 
$$
we can rewrite \eqref{e004} in an equivalent manner
$$
\lim_{s\to0+} s^\gi n(s, T_\p)=\frac{\gga(\g+1)^\gi}2\, ||\p_1||^\gi_{L^\gi(\bt)}.
$$
The proof of the above theorem is purely operator-theoretic and it uses some basic facts on underlying Bergman space only. We also mention a recent article El-Fallah-El-Ibbaoui \cite{fa} on a closely related topic.

The main result of the present article is a counterpart of Theorem \ref{t001} in logarithmic scale.
Let $\p_1\in C(\bt)$, the space of continuous functions on the unit circle. For a fixed $\g>0$, consider
\begin{equation}\label{e0041}
\p(z):=\p_1(e^{i\th})\p_0(r),
\end{equation}
where 
\begin{equation}\label{e0042}
\p_0(r):=\p_{0,\g}(r)=\frac 1{\lp1+\log\frac1{(1-r)}\rp^\g}, \qquad r\in [0,1).
\end{equation}
That is, the symbol $\p(z)$ logarithmically tends to zero when $z$ goes to the unit circle $\bt$.

\begin{theorem}\label{t1} Let $\p$ be the a symbol defined above, $\g>0$. The following asymptotics hold for the singular values of Toeplitz operator $T_\p$
\begin{equation}\label{e005}
\lim_{n\to+\infty} (\log(n+1))^{\g}s_n(T_\g)=||\p_1||_{L^\infty(\bt)}.
\end{equation}
\end{theorem}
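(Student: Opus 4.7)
The plan is to prove matching $\limsup$ and $\liminf$ bounds for $(\log(n+1))^\gamma s_n(T_\varphi)$, both equal to $\|\varphi_1\|_{L^\infty(\bt)}$. The central technical fact is that for any radial symbol $\psi(r)$, $T_\psi$ is diagonal in the orthonormal basis $e_k(z)=\sqrt{k+1}\,z^k$ of $L^2_a(\bd)$ with eigenvalues $\lambda_k(\psi)=2(k+1)\int_0^1 \psi(r) r^{2k+1}\, dr$. A Laplace-type evaluation — the probability density $(2k+2)r^{2k+1}\, dr$ concentrates near $r=1-O(1/k)$, where $1+\log(1/(1-r))\sim\log k$ — yields
$$
\lambda_k(\varphi_0^q)=(\log(k+1))^{-q\gamma}(1+o(1)),\qquad q=1,2,
$$
which is the baseline to match. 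For the upper bound, the variational inequality $\|T_\varphi f\|^2=\|P_+(\varphi f)\|^2\le \|\varphi f\|^2_{L^2(\bd)}=\langle T_{|\varphi|^2}f,f\rangle$ combined with the pointwise bound $|\varphi|^2\le\|\varphi_1\|_\infty^2\varphi_0^2$ gives the operator inequality $T_\varphi^*T_\varphi\le\|\varphi_1\|_\infty^2 T_{\varphi_0^2}$; the min--max principle then yields $s_n(T_\varphi)^2\le\|\varphi_1\|_\infty^2 \lambda_n(\varphi_0^2)$, so $\limsup_n(\log(n+1))^\gamma s_n(T_\varphi)\le\|\varphi_1\|_\infty$.

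For the lower bound, I first apply Fej\'er approximation to replace $\varphi_1$ by a trigonometric polynomial $\psi_1$ of some degree $m$ with $\|\varphi_1-\psi_1\|_\infty<\epsilon$. The upper bound, applied to $T_\varphi-T_{\psi_1\varphi_0}$, gives $s_n(T_\varphi-T_{\psi_1\varphi_0})\le\epsilon(\log(n+1))^{-\gamma}(1+o(1))$; combined with the Ky Fan inequality $s_n(A+B)\ge s_{2n}(A)-s_{n+1}(B)$, any matching-scale lower bound for $T_{\psi_1\varphi_0}$ transfers to $T_\varphi$ with defect $O(\epsilon)$. So it suffices to handle $\psi_1$ trigonometric-polynomial; the matrix of $T_{\psi_1\varphi_0}$ in $\{e_k\}$ is then banded of bandwidth $m$, with entries
$$
(T_{\psi_1\varphi_0})_{j,k}=2\sqrt{(j+1)(k+1)}\,\hat\psi_1(j-k)\int_0^1\varphi_0(r)\,r^{j+k+1}\,dr,
$$
vanishing for $|j-k|>m$. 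On a dyadic block $B_N=\{N,\dots,2N\}$, the uniform radial asymptotic $\int_0^1\varphi_0(r)r^{j+k+1}dr=(\log N)^{-\gamma}/(j+k+2)\cdot(1+o(1))$ together with $2\sqrt{(j+1)(k+1)}/(j+k+2)=\sqrt{1-(j-k)^2/(j+k+2)^2}=1+O(m^2/N^2)$ shows that the principal submatrix on $B_N$ is $(\log N)^{-\gamma}\widetilde T_N(\psi_1)+o((\log N)^{-\gamma})$ in operator norm, where $\widetilde T_N(\psi_1)=(\hat\psi_1(j-k))_{j,k\in B_N}$ is the classical finite banded Toeplitz matrix of $\psi_1$. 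By the Avram--Parter/Szeg\H o spectral distribution theorem for continuous symbols, for every $\delta>0$ a positive fraction $\mu_\delta>0$ of the singular values of $\widetilde T_N(\psi_1)$ exceed $\|\psi_1\|_\infty-\delta$. Taking sufficiently-separated dyadic blocks $B_{N_j}$ with $N_j=2^j$ (so that cross-block coupling vanishes by bandedness, for $j$ large enough), and stacking their top-singular subspaces, the max--min characterization $s_n(T_{\psi_1\varphi_0})=\max_{\dim V=n}\min_{f\in V,\|f\|=1}\|T_{\psi_1\varphi_0}f\|$ delivers $\liminf_n(\log(n+1))^\gamma s_n(T_{\psi_1\varphi_0})\ge\|\psi_1\|_\infty-O(\delta)$; letting $\delta,\epsilon\to 0$ concludes.

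The principal obstacle is the careful block decomposition in the lower bound: one must verify that the within-block Toeplitz approximation of $T_{\psi_1\varphi_0}$ by $(\log N)^{-\gamma}\widetilde T_N(\psi_1)$ holds in operator norm at the required $o((\log N)^{-\gamma})$ precision (so that the contribution of off-diagonal and block-boundary defects is genuinely subordinate to the main term), and that the direct sum of per-block top-singular subspaces truly hosts the uniform lower bound needed by max--min. The radial Laplace asymptotics for $\lambda_k(\varphi_0^q)$ and the Avram--Parter distribution for finite banded Toeplitz matrices with continuous symbols enter as standard ingredients.
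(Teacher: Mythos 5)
Your proposal is correct in outline, but it takes a genuinely different route from the paper's. The paper's proof works with the functionals $\ddg,\dg$ on the classes $\sg,\sgo$, decomposes the symbol angularly into the pieces $\tc_j=\chi_{I_j}\p_0$ over $L$ equal arcs, and rests on the asymptotic orthogonality $T_{\tc_k}^*T_{\tc_j},\,T_{\tc_k}T_{\tc_j}^*\in\ss^0_{2\g}$ (Theorem~\ref{t20}) --- the technical heart of the article, proved in Section~\ref{s4} via kernel bounds for powers of $\o_L P_+\o_1^2P_+\o_L$ and a combinatorial lemma --- combined with Proposition~\ref{p4} and approximation of $\p_1$ by step functions. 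You instead work with the matrix of $T_\p$ in the basis $e_k(z)=\sqrt{k+1}\,z^k$: your upper bound $T_\p^*T_\p\le\|\p_1\|_\infty^2T_{\p_0^2}$ is essentially the paper's Lemma~\ref{l2}, but your lower bound replaces the whole asymptotic-orthogonality machinery by Fej\'er approximation, the banded structure of $T_{\psi_1\p_0}$, dyadic-block compressions that are $(\log N)^{-\g}$ times classical finite Toeplitz matrices up to $o((\log N)^{-\g})$ in operator norm (your ``principal obstacle'' does go through: the error matrix has bandwidth $m$ and uniformly $o((\log N)^{-\g})$ entries, by the uniformity of the Watson-type asymptotics of Lemma~\ref{l01} over $j+k+1\in[2N,4N]$), and the Avram--Parter/Szeg\H{o} distribution, of which only the easy direction is needed (a positive fraction $\mu_\d$ of singular values above $\|\psi_1\|_\infty-\d$; this could even be obtained by elementary localized test vectors). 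Your argument exploits precisely what makes the logarithmic scale special: a constant-factor index dilation, i.e.\ a fixed positive density of good singular values per block, does not affect $(\log n)^{-\g}$ asymptotics, so the sharp counting that asymptotic orthogonality provides in the paper (and which is indispensable in the power-scale result of Pushnitski) is not needed for the limit to be the sup norm; it also runs the paper's Section~\ref{s52} connection to banded matrices in the reverse direction. Minor repairs: adjacent dyadic blocks do couple through the band, so either leave gaps of width exceeding the degree $m$ between the blocks you stack, or simply use a single block $B_N$ with $N\asymp n/\mu_\d$ for each $n$, which already suffices for the max--min bound; the Ky Fan step is best quoted as $s_{2n}(T_{\psi_1\p_0})\le s_n(T_\p)+s_{n+1}(T_{\psi_1\p_0}-T_\p)$, which is what you actually use; and in the upper bound $\l_n(\p_0^2)$ must be read as the $n$-th term of the decreasing rearrangement of the diagonal of $T_{\p_0^2}$, which has the same $(\log(n+1))^{-2\g}$ asymptotics.
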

Generalizations of the above theorem are given in Section \ref{s51}. 

Recalling the definition of the counting function, we can rewrite \eqref{e005} as
\begin{equation}\label{e0051}
\lim_{s\to 0+} s^{1/\g}\log n(s,T_\p)=||\p_1||^{1/\g}_{L^\infty(\bt)},
\end{equation}
where $\p_1\not\equiv 0$ on $\bt$.
We stress that despite a partial similarity, certain important pieces of the proof of Theorem \ref{t1} seem to be more involved than those of Theorem \ref{t001}. Among other technical points, it relies essentially on the structure of Bergman space and it uses fine properties of the Riesz orthoprojector \eqref{e0012} on it.

The organization of the article is rather straightforward. A fast sample of the theory of compact operators with logarithmically decaying singular values is developed in Section \ref{s2}. This section also contains a result on asymptotic orthogonality of certain operators which will be the cornerstone for the proof of  Theorem \ref{t1}. It is proved in Section \ref{s3}. The asymptotic orthogonality for a specific family of Toeplitz operators is obtained in Section \ref{s4}. Section \ref{s5} gives slightly different versions the obtained results as well as their applications to the study of spectral properties of compact banded matrices with logarithmically decaying entries.

Throughout the article, ``generic'' constants change from one relation to another. Significant constants are sub-indexed like $C_1, C_2$, etc. Points of the unit disk $\bd$ are often written as $z=|z|e^{i\th}:=re^{i\th}, \
r\in[0,1), \th\in [0,2\pi)$. As usual, the unit circle $\bt=\{z=e^{i\th}: \th\in[0,2\pi)\}$ is identified with the interval $[0,2\pi]$.

\section{Some special classes of compact operators}\label{s2}

\subsection{Basic notions and Schatten-von Neumann classes of compact ope\-ra\-tors}\label{s21}
In the paper, we shall use a number of basic facts on compact operators, see Gohberg-Krein \cite[Chap. 2]{gk1} and Birman-Solomyak \cite[Chap. 11]{bs1}.

Let $A$ be a compact operator on a Hilbert space. The class of compact operators on the space is denoted by $\css_\infty$. It is well-known that the spectrum $\s(A_0)$ of a self-adjoint compact operator $A_0$ consists of  the closure of the set of eignevalues $\{\l_n(A_0)\}_\nno$ tending to zero. The singular values of a compact operator $A$ are defined as
$$
s_n(A):=\l_n(A^*A)^{1/2}, \quad n\ge 1.
$$
The sequence of singular values $\{s_n(A)\}_\nno$ is written taking into account the multiplicities; it is positive, decreasing and $s_n(A)$ goes to zero as $n\to +\infty$. It is a simple fact that for $A\in\css_\infty$ and a bounded operator $B$, one has
\begin{equation}\label{e02}
s_n(BA)\le ||B||s_n(A),
\end{equation}
see \cite[Chap. 11, Sect. 1]{bs1} or \cite[Chap. 2, Sect. 2]{gk1}.
Another important characteristic connected to the sequence   
$\{s_n(A)\}_n$ is the counting function,
\begin{equation}\label{e01}
n(s,A):=\#\{n: s_n(A)>s\}, \quad s>0.
\end{equation}
For instance, for $A,B\in \css_\infty$ and $s>0$, one has
\begin{equation}\label{e61}
n(s,AB)\le n(s_1)+n(s_2, B),
\end{equation}
where $s=s_1s_2$ and $s_1,s_2>0$, see \cite[Chap. 11, Sect. 1]{bs1} once again.

The Schatten-von Neumann classes $\css_p, \ 0<p<\infty$, are given by
$$
\css_p:=\{A\in \css_\infty: ||A||^p_{\css_p}=\sum^\infty_{n=1} s_n(A)^p<\infty\}, \quad A\in \css_p \Leftrightarrow \int^\infty_0 s^{p-1} n(s,A)\, ds<\infty.
$$
The class $\css_2 \ (p=2)$ is called Hilbert-Schmidt class.

\subsection{Some specific classes of compact operators}\label{s22}
In this paper, we are mainly interested in classes of compact operators with logarithmically decaying singular values. 
Below, we introduce the definitions and give the properties of operators from these classes. Up to certain technical aspects, the proofs of the assertions of this subsection follow Birman-Solomyak \cite[Chap. 11, Sect. 6]{bs1} and they are therefore omitted.

To start with, we consider a ``shifted'' counting function
\begin{equation}\label{e1}
\tn(s,A):=n(s, A)+2, 
\end{equation}
compare with \eqref{e01}. We shall see that $\tn(.,A)$ is more convenient for our purposes from technical point of view. For $\g>0$, set
\begin{eqnarray}
\sg&:=&\lt\{A\in \css_\infty: s_n(A)=O\lp\frac 1{(\log(n+1))^\g}\rp\rt\} \label{e2}\\
&=&\lt\{A\in \css_\infty: \sup_{n\ge 1} \ (\log n)^\g s_n(A)<+\infty\rt\}, \nonumber \\
\sgo&:=&\lt\{A\in \css_\infty: s_n(A)=o\lp\frac 1{(\log(n+1))^\g}\rp\rt\} \label{e3}\\
&=&\lt\{A\in \css_\infty: \lim_{n\to+\infty} (\log n)^\g s_n(A)=0 \rt\}. \nonumber
\end{eqnarray}

For a compact operator $A$, the following relations are equivalent in a trivial way
\begin{equation}\label{e30}
\sup_{n\ge 1}\ (\log (n+1))^\g s_n(A)<\infty, \quad \sup_{s>0}\ s^\gi \log \tn(s,A)<\infty.
\end{equation}
Moreover, the equality
\begin{equation}\label{e301}
s_n(A)=\frac{C}{(\log (n+1))^\g}(1+o(1)), \ n\to+\infty,
\end{equation}
is equivalent to
\begin{equation}\label{e302}
\log\tn(s, A)=\frac{C^\gi}{s^\gi}(1+o(1)), \ s\to0+.
\end{equation}
Two remarks are in order. First,  one can work similarly with the usual and the ``shifted'' counting functions $n(.,A)$ and $\tn(.,A)$, respectively. Second, the classes $\sg$ and $\sgo$ can be similarly defined as
\begin{eqnarray*}
\sg&:=&\lt\{A\in \css_\infty: \log \tn(s,A)=O\lp\frac 1{s^\gi}\rp, \ s>0\rt\} \\ 
&=&\lt\{A\in \css_\infty: \sup_{s>0} s^\gi \log\tn(s,A)<+\infty\rt\},\\
\sgo&:=&\lt\{A\in \css_\infty: \log \tn(s,A)=o\lp\frac 1{s^\gi}\rp, \ s>0\rt\} \\ 
&=&\lt\{A\in \css_\infty: \lim_{s\to 0+} s^\gi \log \tn(s,A)=0 \rt\}. 
\end{eqnarray*}

Once again, let $A\in \sg$. The next quantities will be useful in the sequel.
\begin{equation}\label{e4}
\ddg(A)=\limsup_{s\to 0+} s^\gi \log\tn(s,A),\qquad \dg(A)=\liminf_{s\to 0+} s^\gi \log\tn(s,A).
\end{equation}

\begin{proposition}\label{p1}
Let $A,B\in \sg$. Then
\begin{eqnarray}
&&\lpp\ddg(A+B)\rpp^\go\le \lpp\ddg(A)\rpp^\go+\lpp\ddg(B)\rpp^\go, \label{e51}\\
&&|\lpp\ddg(A)\rpp^\go-\lpp\ddg(B)\rpp^\go|\le \lpp\ddg(A-B)\rpp^\go, \label{e52}\\  
&&|\lpp\dg(A)\rpp^\go-\lpp\dg(B)\rpp^\go|\le\lpp\ddg(A-B)\rpp^\go.\label{e53}
\end{eqnarray}
\end{proposition}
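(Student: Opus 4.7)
My plan rests on the standard counting-function inequality \eqref{e61} upgraded to the shifted $\tn$, namely $\tn(s_1 + s_2, A+B) \le \tn(s_1, A) + \tn(s_2, B)$. Taking $\log$ and using $\log(x+y) \le \log 2 + \max(\log x, \log y)$ produces the master bound
\[
\log \tn(s, A+B) \le \log 2 + \max\lpp \log \tn(s_1, A),\ \log \tn(s_2, B)\rpp,\qquad s = s_1 + s_2,
\]
which will drive all three inequalities.

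To establish \eqref{e51}, I will substitute the $\limsup$-bounds $\log \tn(s_i, \cdot) \le (\ddg(\cdot)+\varepsilon) s_i^{-1/\g}$ into the master bound and minimize over the split $s_1 + s_2 = s$. Equating the two arguments of the maximum (which gives the sharp split) leads, after passing to $\limsup_{s\to 0+}$ and letting $\varepsilon\to 0$, to the intermediate $\g$-subadditivity
\[
\ddg(A+B)^{\g} \le \ddg(A)^{\g} + \ddg(B)^{\g}.
\]
The stated exponent $\g/(1+\g)$ is then recovered by raising to the power $1/(1+\g)\le 1$ and using subadditivity of $x\mapsto x^{1/(1+\g)}$, i.e., $(\ddg(A)^{\g}+\ddg(B)^{\g})^{1/(1+\g)} \le \ddg(A)^{\g/(1+\g)}+\ddg(B)^{\g/(1+\g)}$.

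Inequality \eqref{e52} will follow from \eqref{e51} by writing $A = B + (A-B)$ and $B = A + (B-A)$ and noting $\ddg(A-B) = \ddg(B-A)$. The argument for \eqref{e53} is the subtle one, since $\dg$ is a $\liminf$ and is accessed only along a special subsequence. I plan to fix $s_1^{(n)}\to 0+$ with $(s_1^{(n)})^{1/\g}\log \tn(s_1^{(n)}, B) \to \dg(B)$, set $s_2^{(n)} = c\,s_1^{(n)}$ and $s^{(n)} = (1+c)s_1^{(n)}$ for a free parameter $c>0$, and inject these into the master bound together with the uniform estimate $\log \tn(s_2^{(n)}, A-B) \le (\ddg(A-B)+\varepsilon)(s_2^{(n)})^{-1/\g}$. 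Letting $n\to\infty$ gives
\[
\dg(A) \le (1+c)^{1/\g}\,\max\lpp \dg(B),\ (\ddg(A-B)+\varepsilon) c^{-1/\g} \rpp,
\]
and optimizing $c$ as before yields $\dg(A)^{\g} \le \dg(B)^{\g} + \ddg(A-B)^{\g}$. Swapping the roles of $A$ and $B$ and applying the same $1/(1+\g)$-th power trick completes \eqref{e53}.

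The most delicate point will be the sequential argument in \eqref{e53}: one has to marry a particular subsequence realizing $\dg(B)$ with a bound that is \emph{uniform} in $s_2$ for $\ddg(A-B)$, while keeping the total $s^{(n)} = (1+c)s_1^{(n)}$ tending to $0$ so that the $\liminf$ defining $\dg(A)$ is actually controlled. The degenerate case $\dg(B)=0$ will be handled by sending $c\to\infty$, which collapses the first argument of the maximum and leaves only $\dg(A)^{\g}\le \ddg(A-B)^{\g}$, consistent with the stated inequality.
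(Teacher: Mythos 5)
Your proof is correct, and it reaches the stated inequalities by a genuinely different route from the one the paper relies on: the paper gives no argument and refers to \cite[Chap.~11, Sect.~6]{bs1}, whose scheme (transcribed to the logarithmic scale) starts from the same Ky Fan bound $\tn(s_1+s_2,A+B)\le\tn(s_1,A)+\tn(s_2,B)$ but then estimates \emph{additively}, $\log\lp\tn(s_1,A)+\tn(s_2,B)\rp\le\log\tn(s_1,A)+\log\tn(s_2,B)$ (legitimate since $\tn\ge2$), and minimizes $a\lambda^{-\gi}+b(1-\lambda)^{-\gi}$ over the split $s_1=\lambda s$; that optimization is exactly what produces the exponent $\go$ in \eqref{e51}--\eqref{e53}, the analogue of the Birman--Solomyak exponent $1/(1+p)$ with $p=\gi$. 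You replace the additive step by $\log(x+y)\le\log 2+\max(\log x,\log y)$ and equalize the two branches of the maximum, which yields the stronger intermediate inequalities $\ddg(A+B)^\g\le\ddg(A)^\g+\ddg(B)^\g$ and $\dg(A)^\g\le\dg(B)^\g+\ddg(A-B)^\g$; the stated exponent $\go$ then follows by subadditivity of $t\mapsto t^{1/(1+\g)}$, so you in fact prove more than is claimed, and your version is sharp (for $B=A$ one has $\ddg(2A)^\g=2\,\ddg(A)^\g$, since $\tn(s,2A)=\tn(s/2,A)$). What the Birman--Solomyak route buys is that it transfers verbatim from the power-scale classes, which explains the weaker exponent in the statement; what your route buys is the sharper constant and a clean passage to \eqref{e53}: fixing a sequence realizing $\dg(B)$, taking a fixed ratio $s_2=c\,s_1$ so that only the uniform $\limsup$ bound is needed for $A-B$, optimizing over $c$ only after $n\to\infty$, and letting $c\to\infty$ when $\dg(B)=0$ is precisely the care that inequality requires. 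Two cosmetic points to fix when writing it up: the inequality you cite as \eqref{e61} is stated in the paper for products ($s=s_1s_2$); what you actually use is its additive counterpart for sums, equally standard, and compatible with the shift because $n(s_1+s_2,A+B)+2\le\tn(s_1,A)+\tn(s_2,B)$. Also, in \eqref{e51} the optimal split $\lambda$ must be chosen independently of $s$ (as you implicitly do, with $a=\ddg(A)+\vep>0$, $b=\ddg(B)+\vep>0$), so that $s_1,s_2\to0$ together with $s$ and the $\vep$-bounds from the definition of $\limsup$ apply.
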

The proof of the above proposition follows \cite[Chap. 11, Sect. 6, Thm. 4, Cor. 5]{bs1}. The next proposition is its simple corollary.
\begin{proposition}[Ky-Fan-type lemma]\label{p2}
Let $A\in\sg$ and $B\in\sgo$. Then
\begin{equation}\label{e6}
\ddg(A+B)=\ddg(A),\qquad  \dg(A+B)=\dg(A).
\end{equation}
\end{proposition}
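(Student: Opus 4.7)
The plan is to derive Proposition \ref{p2} directly from the three inequalities in Proposition \ref{p1}, exploiting the fact that $B\in\sgo$ forces the quantity $\ddg(B)$ (and hence $\dg(B)$) to vanish.

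First I would record the simple observation that $B\in\sgo$ means $\lim_{s\to 0+} s^\gi \log\tn(s,B)=0$, so in particular the $\limsup$ in the definition \eqref{e4} gives $\ddg(B)=0$. Taking $A$ and $B$ as in the hypothesis, I would apply \eqref{e52} to the pair $(A+B, A)$, noting $(A+B)-A=B$, which yields
\begin{equation*}
\big|\lpp\ddg(A+B)\rpp^\go-\lpp\ddg(A)\rpp^\go\big|\le \lpp\ddg(B)\rpp^\go=0.
\end{equation*}
Since $x\mapsto x^\go$ is strictly monotone on $[0,\infty)$, this forces $\ddg(A+B)=\ddg(A)$, which is the first equality in \eqref{e6}. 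Note that we do need $A+B\in\sg$ to make sense of $\ddg(A+B)$, but this follows from \eqref{e51} together with $\ddg(A)<\infty$ and $\ddg(B)=0$.

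For the second equality I would repeat the same argument using \eqref{e53} instead of \eqref{e52}, applied to the pair $(A+B, A)$, obtaining
\begin{equation*}
\big|\lpp\dg(A+B)\rpp^\go-\lpp\dg(A)\rpp^\go\big|\le \lpp\ddg(B)\rpp^\go=0,
\end{equation*}
whence $\dg(A+B)=\dg(A)$. No genuine obstacle is to be expected: the work has been front-loaded into Proposition \ref{p1}, and Proposition \ref{p2} merely records the fact that operators in $\sgo$ are ``negligible perturbations'' with respect to the functionals $\ddg$ and $\dg$ on $\sg$, which is the $\log$-scale analogue of the classical Ky Fan stability lemma for Schatten classes.
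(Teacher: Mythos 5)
Your argument is correct and is exactly the route the paper intends: the paper states Proposition \ref{p2} as a ``simple corollary'' of Proposition \ref{p1}, and your deduction — noting $\ddg(B)=0$ for $B\in\sgo$ and applying \eqref{e52}, \eqref{e53} to the pair $(A+B,A)$ with $(A+B)-A=B$ — is precisely that deduction, spelled out. Your side remark that $A+B\in\sg$ follows from \eqref{e51} is also the right justification, so nothing is missing.
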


\begin{proposition}\label{p3} We have:
\begin{enumerate}
\item for any $p,\g>0$, $\css_p\subset \ss^0_\g$,
\item if $A,B\in \sg$, then $\dd_{2\g}(AB)\le\ddg(A)+\ddg(B)$,
and, consequently, $AB\in \ss_{2\g}$.
\item if $A\in \sg$ and $B\in\sgo$, then $\dd_{2\g}(AB)=0$, \ie $AB\in \ss^0_{2\g}$.
\end{enumerate}
\end{proposition}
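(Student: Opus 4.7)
The plan is to handle the three items in succession, relying in each case on the characterization \eqref{e30}--\eqref{e302} of $\sg$ and $\sgo$ via the shifted counting function $\tn(\cdot,\cdot)$ and on the submultiplicative bound \eqref{e61}; the main degree of freedom will be the way I balance the two factors $s_1,s_2$ in \eqref{e61}.

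For item (1), I will use the standard Schatten-class tail estimate: since $\{s_n(A)\}$ is nonincreasing and $\sum s_n(A)^p<\infty$, the tail argument $n\,s_n(A)^p\le 2\sum_{k>n/2}s_k(A)^p\to 0$ gives $s_n(A)=o(n^{-1/p})$. Combining this with the elementary comparison $(\log(n+1))^\g = o(n^{1/p})$, valid for any $\g,p>0$, yields $(\log(n+1))^\g s_n(A)\to 0$, i.e.\ $A\in\sgo$.

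For item (2), I will plug the symmetric choice $s_1=s_2=\sqrt{s}$ into \eqref{e61}, which gives, up to a harmless additive constant stemming from the $+2$ in \eqref{e1},
$$
\tn(s,AB)\le \tn(\sqrt{s},A)+\tn(\sqrt{s},B).
$$
Since $\tn\ge 2$, this in turn yields $\log\tn(s,AB)\le \log 2+\log\tn(\sqrt{s},A)+\log\tn(\sqrt{s},B)$. Multiplying both sides by $s^{1/(2\g)}=(\sqrt{s})^{1/\g}$ and passing to $\limsup_{s\to 0+}$ (noting that $\sqrt{s}\to 0+$ simultaneously) converts the two summands on the right into $\ddg(A)$ and $\ddg(B)$, producing $\dd_{2\g}(AB)\le\ddg(A)+\ddg(B)<\infty$, so in particular $AB\in\ss_{2\g}$.

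The main obstacle is item (3): substituting $\ddg(B)=0$ into the estimate from (2) only gives $\dd_{2\g}(AB)\le\ddg(A)$, which need not be zero, so the balanced splitting squanders the extra decay of $B$. To exploit it, I will use an unbalanced splitting tuned by the monotone envelope
$$
\eta(u):=\sup_{0<v\le u}v^{1/\g}\log\tn(v,B),
$$
which satisfies $\eta(u)\to 0$ as $u\to 0+$ precisely because $B\in\sgo$. Setting
$$
s_1:=\sqrt{s}\,\eta(\sqrt{s})^{-\g/2},\qquad s_2:=\sqrt{s}\,\eta(\sqrt{s})^{\g/2},
$$
so that $s_1s_2=s$ with $s_1$ noticeably larger than $\sqrt{s}$ and $s_2$ noticeably smaller, a direct computation shows that both the $A$-contribution $s^{1/(2\g)}\log\tn(s_1,A)$ and the $B$-contribution $s^{1/(2\g)}\log\tn(s_2,B)$ are majorized by a constant multiple of $\eta(\sqrt{s})^{1/2}$, which tends to $0$. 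Feeding this back through \eqref{e61} gives $\dd_{2\g}(AB)=0$, i.e.\ $AB\in\ss^0_{2\g}$.
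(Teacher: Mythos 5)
Your proof is correct, and it is worth pointing out that the paper itself offers no argument for Proposition \ref{p3}: it only remarks that the statements of this subsection follow Birman--Solomyak \cite[Chap.~11, Sect.~6]{bs1} ``up to certain technical aspects''. So what you give is a self-contained substitute rather than a reproduction. Items (1) and (2) are routine and you handle them as expected: the Schatten tail estimate plus $(\log(n+1))^\g=o(n^{1/p})$ for (1), and the balanced choice $s_1=s_2=\sqrt s$ in \eqref{e61} for (2), where the shift $+2$ in \eqref{e1} is exactly what makes $\log\tn(s,AB)\le\log\tn(\sqrt s,A)+\log\tn(\sqrt s,B)$ (or your version with an extra $\log 2$) legitimate, and the conclusion $AB\in\ss_{2\g}$ then follows via the equivalence \eqref{e30}. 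The genuine content is item (3), where, as you say, the balanced splitting only yields $\dd_{2\g}(AB)\le\ddg(A)$; your unbalanced splitting $s_1=\sqrt s\,\eta(\sqrt s)^{-\g/2}$, $s_2=\sqrt s\,\eta(\sqrt s)^{\g/2}$ driven by the monotone envelope $\eta$ does work: one gets $s^{1/(2\g)}\log\tn(s_1,A)\le M\,\eta(\sqrt s)^{1/2}$ and $s^{1/(2\g)}\log\tn(s_2,B)\le\eta(\sqrt s)^{1/2}$, both tending to $0$. Two small points deserve to be written out explicitly: first, the bound $\log\tn(v,A)\le M v^{-1/\g}$ coming from $A\in\sg$ is only guaranteed for small $v$, so you should check that $s_1\to0+$; this is automatic because $\tn\ge2$ forces $\eta(u)\ge(\log 2)\,u^{1/\g}$, whence $s_1\le(\log 2)^{-\g/2}s^{1/4}$; second, in the $B$-term you implicitly use the monotonicity of $\eta$ together with $s_2\le\sqrt s$ to replace $\eta(s_2)$ by $\eta(\sqrt s)$. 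With these remarks made explicit, your argument is complete and, arguably, more transparent than an appeal to the power-scale analogues in Birman--Solomyak.
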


\subsection{Asymptotical orthogonality of a set of operators}\label{s23}
Let $A$ be a compact operator and
$$
A=\sum^L_{k=1} A_k.
$$
Suppose that $A_k\in\css_\infty, \ k=1,\dots,L$, as well. The coming abstract proposition will prove to be a useful tool for our purposes. Up to some technical details, it is is similar to Pushnitski \cite[Theorem 2.2]{pu1}, and we give a fast sketch of its proof only.
\begin{proposition}\label{p4}
Suppose that $A, A_k, \ k=1,\dots, L$, are as above and the family $\{A_k\}_k$ is asymptotically orthogonal, that is
\begin{equation}\label{e08}
A^*_kA_j, A_kA^*_j\in \ss^0_{2\g}, \quad j\not=k,\ \  j,k=1,\dots,L.
\end{equation}
Then 
\begin{equation}\label{e8}
\ddg(A)=\lsup s^\gi\log\lp\sum_{k=1}^L\tn(s,A_k)\rp,\quad \dg(A)=\linf s^\gi\log\lp\sum_{k=1}^L\tn(s,A_k)\rp.
\end{equation}
\end{proposition}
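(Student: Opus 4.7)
The plan is to reduce the statement for $A$ to an analogous one for a manifestly block-diagonal operator whose singular-value counting function is transparently $\sum_k\tn(\cdot,A_k)$, the reduction being carried out by two perturbations that are $\ss^0_{2\g}$-small (hence invisible to $\dd_{2\g}$ and $\d_{2\g}$ by Proposition \ref{p2}), plus a square-root change of variable that trades the exponent $2\g$ for $\g$.

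First, I would introduce the column operator $R:\Hc\to\Hc^L$ defined by $Rf=(A_1f,\dots,A_Lf)$, where $\Hc$ is the ambient Hilbert space. A direct computation yields
\[
R^*R=\sum_{k=1}^LA_k^*A_k,\qquad RR^*=\lp A_jA_k^*\rp_{j,k=1}^L,
\]
from which one reads off the two basic decompositions
\[
A^*A=R^*R+E_1,\qquad RR^*=D+E_2,
\]
where $E_1=\sum_{j\neq k}A_j^*A_k$, $D=\diag(A_kA_k^*)_{k=1}^L$ on $\Hc^L$, and $E_2$ is the off-diagonal block part of $RR^*$. By hypothesis \eqref{e08}, $E_1$ and $E_2$ are finite sums of operators in $\ss^0_{2\g}$, hence themselves in $\ss^0_{2\g}$. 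Applying Proposition \ref{p2} at the parameter $2\g$ twice, and using that $R^*R$ and $RR^*$ share the same non-zero spectrum, I obtain
\[
\dd_{2\g}(A^*A)=\dd_{2\g}(R^*R)=\dd_{2\g}(RR^*)=\dd_{2\g}(D),
\]
together with the same chain for $\d_{2\g}$ in place of $\dd_{2\g}$.

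Next, the right-hand side is explicit: the eigenvalues of $D$ counted with multiplicity are the merge of $\{s_n(A_k)^2\}_{n\ge 1,\,k=1,\dots,L}$, so $\tn(s,D)=\sum_{k=1}^L\tn(\sqrt s,A_k)-2(L-1)$. Since the sum inside the logarithm tends to $+\infty$ as $s\to 0+$ (otherwise the conclusion is immediate), the additive constant is swallowed in the limit, and the substitution $u=\sqrt s$ gives
\[
\dd_{2\g}(D)=\limsup_{u\to 0+}u^\gi\log\sum_{k=1}^L\tn(u,A_k).
\]
On the other hand, $s_n(A^*A)=s_n(A)^2$ yields $\tn(s,A^*A)=\tn(\sqrt s,A)$, and the same change of variable turns $\dd_{2\g}(A^*A)$ into $\ddg(A)$. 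Chaining these identities produces the first equality of \eqref{e8}; repeating the argument with liminf in place of limsup yields the second.

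The only step demanding care is the bookkeeping in Proposition \ref{p2} when $R^*R$ or $D$ fails to belong to $\ss_{2\g}$, in which case $\dd_{2\g}$ takes the value $+\infty$. There one must invoke inequality \eqref{e52} in both directions to conclude that an $\ss^0_{2\g}$-perturbation preserves even this infinite value, so that each link of the chain remains valid. The remainder of the argument is purely algebraic and follows the pattern of \cite[Theorem 2.2]{pu1}; the only novelty is working uniformly in the logarithmic scale via a single application of Proposition \ref{p2} to the square $A^*A$.
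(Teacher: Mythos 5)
Your proof is correct and is essentially the paper's own argument: your column operator $R$ is just $A_0J^*$ in the paper's notation, and both proofs run the same chain --- two $\ss^0_{2\g}$-perturbations removed via Proposition \ref{p2} (off-diagonal products $A_j^*A_k$, $A_jA_k^*$), the equality of nonzero spectra of $T^*T$ and $TT^*$, and reading the counting function off the (block-)diagonal operator. The only cosmetic difference is that you work with $A^*A$ and $\diag(A_kA_k^*)$, hence the $\sqrt{s}$ bookkeeping, where the paper reads $n(s,A_0)$ directly from the block-diagonal $A_0$; your explicit handling of the possibly infinite $\dd_{2\g}$ via \eqref{e52} is a welcome extra precaution.
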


\begin{proof} Set $H_L=\oplus_{k=1}^L H$, and
$$
A_0=\diag\{A_1,\dots, A_L\}: H_L\to H_L,
$$
that is 
$$A_0(f_1,\dots, f_L)=(A_1f_1,\dots A_Lf_L)
$$ 
for arbitrary $(f_1,\dots f_L)\in H_L$.
Consider also the embedding operator $J:H_L\to H$ given by
$$
J(f_1,\dots f_L)=f_1+\dots+f_L.
$$
We have $J^*f=(f,\dots,f):H\to H_L$. A straightforward computation shows that
\begin{eqnarray}
(JA_0)(JA_0)^*f&=&(A_1A_1^*+\dots+A_LA_L^*)f, \label{e91}\\
&&\nn \\
(JA_0)(JA_0)^*&=&
\begin{bmatrix}
A_1^*A_1&\dots &A^*_1A_L\\
\vdots& \ddots &\vdots \\
A^*_LA_1& \dots &A^*_LA_L
\end{bmatrix}, \label{e92}
\end{eqnarray}
where we used a natural block decomposition for the operator $(JA_0)(JA_0)^*: H_L\to H_L$.
Since the operator $A_0$ is block-diagonal on $H_L$, we see
$$
n(s,A_0)=\sum^L_{k=0}n(s, A_k), \ s>0.
$$
Now, by assumption \eqref{e08}, we have substracting the diagonal parts
$$
(JA_0)^*(JA_0)-A^*_0A_0\in \ss^0_{2\g}.
$$
Notice that the singular values of $T^*T$ and $TT^*$ coincide for any compact ope\-ra\-tor $T$, and, in particular,  $\dd_{2\g}((JA_0)^*(JA_0))=\dd_{2\g}((JA_0)(JA_0)^*)$. Hence, by Proposition \ref{p2},
\begin{eqnarray*}
\dd_{2\g}((JA_0)^*(JA_0))&=&\dd_{2\g}(A_0^*A_0)=\ddg(A_0)\\
&=&\lsup s^\gi \log\lp\sum^L_{k=1}n(s,A_k)+2\rp.
\end{eqnarray*}
Furthermore, since $AA^*=\sum^L_{k, j=1}A_kA_j^*$, relation \eqref{e08} and Proposition \ref{p2} yield once again
$$
AA^*-(JA_0)(JA_0)^*\in \ss^0_{2\g},
$$
and $\dd_{2\g}(AA^*)=\dd_{2\g}((JA_0)(JA_0)^*)$. 
Putting together the above computations, we obtain
\begin{eqnarray}
\ddg(A)&=&\dd_{2\g}(AA^*)=\dd_{2\g}((JA_0)(JA_0)^*)=\dd_{2\g}((JA_0)^*(JA_0)) \nn\\
&=&\ddg(A_0)=\lsup s^\gi \log\lp\sum^L_{k=1}n(s,A_k)+2\rp. \label{e93}
\end{eqnarray}
Passing from counting functions $n(.,A_k)$ to $\tn(.,A_k)$ is also obvious, and so the first relation in \eqref{e8} is proved. The proof of the second relation in \eqref{e8} is the same (with $\limsup$ replaced by $\liminf$). The proposition is completed.
\end{proof}

\section{Proof of the main theorem}\label{s3}

\subsection{Some notation and starting remarks}
Recall the definition of function $\p$, see \eqref{e0041}, \eqref{e0042}. To begin with, we consider the simplest radial case $\p_1(e^{i\th})\equiv 1$, so that $\p(z):=\p_0(z)=\p_0(r)$.
\begin{lemma}\label{l02} We have
\begin{equation}\label{e102}
\lim_{n\to+\infty} (\log(n+1))^\g s_n(T_{\p_0})=||\p_1||_{L^\infty(\bt)}=1.
\end{equation}
\end{lemma}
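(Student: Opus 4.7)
\textbf{Proof plan for Lemma \ref{l02}.}

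The plan is to exploit the radial symmetry of $\po$ to diagonalize $T_{\po}$ in the standard monomial basis $e_n(z):=\sq{n+1}\, z^n$, $n\ge 0$, of $L^2_a(\bd)$. A direct computation in polar coordinates shows that the off-diagonal matrix entries $\langle T_{\po}e_n, e_m\rangle$ vanish for $n\ne m$ because the $\th$-integral does, and that
$$
\l_n := \langle T_{\po}e_n, e_n\rangle = 2(n+1)\int_0^1 \po(r)\, r^{2n+1}\, dr > 0.
$$
Since $T_{\po}$ is self-adjoint with positive eigenvalues, $s_n(T_{\po})$ is the decreasing rearrangement of $\{\l_n\}$. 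To see that $\{\l_n\}$ is already decreasing I would write $\l_n=\int_0^1\po(r)f_n(r)\, dr$ with $f_n(r):=2(n+1)r^{2n+1}$ a probability density on $[0,1]$, and observe that $f_n-f_{n+1}=2r^{2n+1}[(n+1)-(n+2)r^2]$ changes sign exactly once on $(0,1)$, at $r_0=\sq{(n+1)/(n+2)}$; combined with the monotone decrease of $\po$ and the identity $\int_0^1(f_n-f_{n+1})\, dr=0$ this yields $\l_n\ge \l_{n+1}$. Hence $s_{n+1}(T_{\po})=\l_n$ for all $n\ge 0$.

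It remains to establish the asymptotics $(\log(n+1))^\g\l_n\to 1$. The substitution $t:=(2n+2)(1-r)$ rewrites the eigenvalue as
$$
\l_n = \int_0^{2n+2}\lp 1+\log\frac{2n+2}{t}\rp^{-\g}\lp 1-\frac{t}{2n+2}\rp^{2n+1}\, dt.
$$
For each fixed $t>0$, one has $(1-t/(2n+2))^{2n+1}\to e^{-t}$ and $(\log(n+1))^\g / (1+\log((2n+2)/t))^\g \to 1$ as $n\to\infty$, so the integrand of $(\log(n+1))^\g\, \l_n$ converges pointwise to $e^{-t}$. To make the limit rigorous I would split the integral at cut-offs $R$ and $\sq n$: on $[0,R]$ the integrand is dominated (for large $n$) by $2^\g e^{-t/2}$ and dominated convergence gives the contribution $\to\int_0^R e^{-t}\, dt$; on $[R,\sq n]$ the pointwise bound $1+\log((2n+2)/t)\ge \tfrac 12\log n$ for large $n$ together with $(1-t/(2n+2))^{2n+1}\le e^{-t/2}$ yields an $O(e^{-R/2})$ remainder; on $[\sq n,2n+2]$ the crude bound $(\log(n+1))^\g e^{-t/2}$ integrated gives $O((\log n)^\g e^{-\sq n/2})=o(1)$. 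Letting first $n\to\infty$ and then $R\to\infty$ produces $(\log(n+1))^\g\l_n\to\int_0^\infty e^{-t}\, dt=1$, and since $\log(n+1)/\log n\to 1$, passing from $\l_{n-1}$ to $s_n(T_{\po})$ is harmless.

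The main technical point is the ``large $t$'' range, where the ratio $(\log(n+1))/(1+\log((2n+2)/t))$ can blow up to $\log n$ rather than tend to $1$; the fast decay of the factor $(1-t/(2n+2))^{2n+1}$ must absorb this apparent $(\log n)^\g$ growth. Apart from this bookkeeping, the proof is a routine Laplace-type asymptotic for an integral whose mass concentrates in a $1/n$-neighbourhood of $r=1$, so the abstract machinery of Section \ref{s2} is not yet needed at this stage.
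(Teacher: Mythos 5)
Your argument is correct, and it deviates from the paper's proof only in how the key integral asymptotics are obtained. The paper proceeds exactly as you do up to the diagonalization: radial symbol, hence $T_{\po}$ is diagonal in the basis $e_n(z)=\sqrt{n+1}\,z^n$ with entries $2(n+1)\int_0^1 r^{2n+1}\po(r)\,dr$; but at that point it simply invokes Lemma \ref{l01} of Appendix A (a Watson-type lemma in logarithmic scale, quoted from \cite{kn1}) with $g\equiv 1$ to conclude that this quantity equals $(\log(n+1))^{-\g}(1+o(1))$. Your substitution $t=(2n+2)(1-r)$ together with the three-zone splitting at $R$ and $\sqrt{n}$ is a correct, self-contained proof of precisely that special case: the bounds $(1-t/(2n+2))^{2n+1}\le e^{-t/2}$ and $1+\log((2n+2)/t)\ge\tfrac12\log n$ on $[0,\sqrt n]$ do the work you describe, and the crude estimate $(\log(n+1))^{\g}e^{-\sqrt n/2}=o(1)$ correctly absorbs the zone near $t=2n+2$ where the logarithmic ratio degenerates. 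Two further remarks. Your verification that the diagonal entries $\l_n$ are already decreasing (single sign change of $f_n-f_{n+1}$ against the decreasing $\po$) addresses a point the paper passes over in silence; it is welcome, though not strictly necessary, since the decreasing rearrangement of a positive sequence asymptotic to $(\log n)^{-\g}$ has the same asymptotics. Conversely, what the paper's route buys is the general boundary weight $g\in L^\infty[0,1)$, $g(1)\ne 0$, in Lemma \ref{l01}, which is reused later (Corollary \ref{c1}); your Laplace-type computation extends to that setting with only minor changes, but as written it covers only $g\equiv 1$, which is all that Lemma \ref{l02} requires.
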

\begin{proof}
Since the symbol $\p_0$ is radial,  the matrix of the positive compact ope\-ra\-tor $T_{\p_0}$ is diagonal when computed in the standard orthonormal basis of the Bergman space $\{e_n\}_{n=0,\dots},\ e_n(z)=\sqrt{n+1}\, z^n$. Passing to polar coordinates and using Lemma \ref{l01} (with $g(r)\equiv 1$ on $[0,1]$), we obtain
\begin{equation}\label{e002}
\begin{split}
s_n(T_\po)&=(T_\po e_n, e_n)=2(n+1)\int^1_0 r^{2n+1}\po(r)\,dr\\
&=2(n+1)\, \frac 1{(2n+1)(\log(2n+1))^\g}(1+o(1))\\
&=\frac1{(\log (n+1))^\g}\lp 1+o(1)\rp, \quad n\to+\infty.
\end{split}
\end{equation}
\end{proof}

Now, take a natural $L>0$ and let $I_j:=[2\pi (j-1)/L, 2\pi j/L), \ j=1,\dots, L$ be the partition of $[0,2\pi)$ into disjoint intervals of equal length. Set $\chi_{I_j}$ to be the characteristic functions of the intervals $I_j, \ j=1,\dots, L$, and, moreover
\begin{equation}\label{e101}
\tc_j:=\chi_{I_j}\p_0, \quad T_{\tc_j}:=P_+\tc_j=P_+(\chi_{I_j}\p_0), 
\end{equation}
see \eqref{e0042}.
It is clear that the operators $T_{\tc_j}$ are unitarily equivalent to $T_{\tc_1}$ by rotation $z\mapsto e^{-i(2\pi (j-1)/L)}z, \ z\in\bd$, and so their singular values and counting functions coincide, $\tn(s,T_{\tc_j})=\tn(s,T_{\tc_1}), \ s>0, \ j=1,\ldots, L$.

\subsection{Proof of Theorem \ref{t1}}\label{s32}
The main tools of the proofs appearing in the present subsection are Proposition \ref{p4} and Theorem \ref{t20} saying that the above defined operators $T_{\tc_j}, \ j=1,\ldots, L$, form an asymptotically orthogonal family. The proof of Theorem \ref{t20} is given in Section \ref{s4}. Recall that $\p=\p_1\p_0$, see \eqref{e0041}, \eqref{e0042}.

\begin{lemma}\label{l1} The following claims hold true:
\begin{enumerate}
\item Let $\p:=\p_0$, so that $\p_1\equiv 1$ on $\bt$. Then
$$
\ddg(T_{\p_0})=\dg(T_{\p_0})=1.
$$
\item For $j=1,\ldots, L$,
\begin{equation*}
\ddg(T_{\tc_j})=\dg(T_{\tc_j})=1.
\end{equation*}
\end{enumerate}
\end{lemma}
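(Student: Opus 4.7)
Part (1) is essentially a restatement of Lemma \ref{l02} in the counting-function language. Lemma \ref{l02} (taken with $\p_1\equiv 1$) gives $s_n(T_{\p_0})=(\log(n+1))^{-\g}(1+o(1))$, which via the equivalence of \eqref{e301} and \eqref{e302} with $C=1$ is precisely
$$\log\tn(s,T_{\p_0})=s^{-1/\g}(1+o(1)),\quad s\to 0+.$$
Multiplying by $s^{1/\g}$ and taking both $\limsup$ and $\liminf$ yields $\ddg(T_{\p_0})=\dg(T_{\p_0})=1$.

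\textbf{Plan for (2).} I plan to reduce (2) to (1) by combining the decomposition $T_{\p_0}=\sum_{j=1}^L T_{\tc_j}$ (which follows from $\sum_{j=1}^L\chi_{I_j}\equiv 1$ on $\bd$) with the asymptotic orthogonality of the family $\{T_{\tc_j}\}$. Two preparatory observations. First, each symbol $\tc_j=\chi_{I_j}\p_0$ is nonnegative, so $T_{\tc_j}\ge 0$, and since $T_{\p_0}-T_{\tc_j}=\sum_{k\ne j}T_{\tc_k}\ge 0$ we have $0\le T_{\tc_j}\le T_{\p_0}$; consequently $s_n(T_{\tc_j})\le s_n(T_{\p_0})$, which secures both compactness and $T_{\tc_j}\in\sg$. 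Second, as noted after \eqref{e101}, the rotation $z\mapsto e^{-i2\pi(j-1)/L}z$ implements a unitary equivalence of $T_{\tc_j}$ with $T_{\tc_1}$, so $\tn(s,T_{\tc_j})=\tn(s,T_{\tc_1})$ for every $j$ and it suffices to evaluate $\ddg$ and $\dg$ on $T_{\tc_1}$.

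\emph{Anticipating Theorem \ref{t20}}, which asserts that $\{T_{\tc_j}\}_{j=1}^L$ is asymptotically orthogonal in the sense of \eqref{e08}, I apply Proposition \ref{p4} with $A=T_{\p_0}$ and $A_k=T_{\tc_k}$ and use the unitary equivalence to obtain
$$\ddg(T_{\p_0})=\lsup s^{1/\g}\log\Bigl(\sum_{k=1}^L\tn(s,T_{\tc_k})\Bigr)=\lsup s^{1/\g}\bigl(\log L+\log\tn(s,T_{\tc_1})\bigr).$$
Since $s^{1/\g}\log L\to 0$ as $s\to 0+$, the right-hand side equals $\ddg(T_{\tc_1})$; combined with part (1) this gives $\ddg(T_{\tc_1})=1$, and the identical argument with $\liminf$ in place of $\limsup$ gives $\dg(T_{\tc_1})=1$. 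The unitary equivalence then extends both equalities to every $j=1,\dots,L$. The only nontrivial ingredient in this scheme is Theorem \ref{t20}, whose proof---which leans on the fine behaviour of the Riesz projection $P_+$ on the Bergman space---is the main technical obstacle and is deferred to Section \ref{s4}.
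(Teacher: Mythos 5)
Your proposal is correct and follows essentially the same route as the paper: part (1) is read off from Lemma \ref{l02} via the equivalence of \eqref{e301} and \eqref{e302}, and part (2) combines the decomposition $T_{\p_0}=\sum_j T_{\tc_j}$, Theorem \ref{t20}, Proposition \ref{p4}, and the rotation-induced equality of the counting functions $\tn(\cdot,T_{\tc_j})=\tn(\cdot,T_{\tc_1})$, with the harmless $\log L$ term absorbed since $s^{1/\g}\log L\to 0$. Your extra remark that $0\le T_{\tc_j}\le T_{\p_0}$ (hence $s_n(T_{\tc_j})\le s_n(T_{\p_0})$ and $T_{\tc_j}\in\sg$) is a small useful addition that the paper leaves implicit, but it does not change the argument.
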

\begin{proof}  Theorem \ref{t20} is crucial for the proof of the current lemma. Its first claim is a simple rewriting of relation \eqref{e102} with the help of observation on the equivalence of \eqref{e301} and \eqref{e302}.

Turning to the second claim of the lemma, we keep  $\p:=\p_0$ (or $\p_1\equiv 1$ on $\bt$). 
Of course,
$$
\p_1:=\sum_{j=1}^L\, 1\cdot\chi_j, \quad T_\p:=T_{\p_0}=\sum_{j=1}^L\,1\cdot T_{\tc_j}.
$$
By Theorem \ref{t20}, the operators $T^*_{\tc_j} T_{\tc_k}, T_{\tc_j} T^*_{\tc_k}$ lie in $\ss^0_{2\g}$ for $j\not=k, \ j,k=1,\dots, L$, and so Proposition \ref{p4} shows that
\begin{eqnarray}\label{e10}
\ddg(T_{\p_0})&=&\lsup s^\gi\log\lp\sum^L_{j=1}\tn(s,T_{\tc_j})\rp\\
&=&\lsup s^\gi\log(L\tn(s,T_{\tc_1}))=\ddg(T_{\tc_1}). \nn
\end{eqnarray}
Similarly, one sees $\dg(T_{\p_0})=\dg(T_{\tc_1})$. It remains to recall that counting functions $\tn(., T_{\tc_j})$ and $\tn(., T_{\tc_1})$ coincide, and the lemma is proved.
\end{proof}

The next proposition says that the claim of Theorem \ref{t1} is rather simple to prove when $\p_1$ is a step (or a ``staircase'') function given by
$$
\p_1=\sum^L_{j=1} c_j\cdot \chi_j,
$$
where $c_j\in\bc, \ j=1,\ldots, L$.
\begin{proposition}\label{p41} For a step function $\p_1$ defined above, we have
$$
\ddg(T_\p)=\dg(T_\p)=||\p_1||^\gi_\lni.
$$
\end{proposition}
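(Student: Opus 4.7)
The plan is to decompose $T_\p$ by linearity along the step structure of $\p_1$, transfer the asymptotic orthogonality of the pieces from Theorem \ref{t20} to the rescaled pieces $c_j T_{\tc_j}$, and then extract the constant $\|\p_1\|^\gi_\lni$ from the finite sum using the asymptotic already established in Lemma \ref{l1}.

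First, write $\p = \p_1\po = \sum_{j=1}^L c_j\tc_j$ and, by linearity of $\p\mapsto T_\p$, $T_\p = \sum_{j=1}^L c_j T_{\tc_j}$. Set $A_j := c_j T_{\tc_j}$. For $j\ne k$,
\[
A_j^* A_k = \bar c_j c_k\, T^*_{\tc_j} T_{\tc_k}, \qquad A_j A_k^* = c_j \bar c_k\, T_{\tc_j} T^*_{\tc_k},
\]
both of which lie in $\ss^0_{2\g}$ by Theorem \ref{t20}. Hence $\{A_j\}_{j=1}^L$ is an asymptotically orthogonal family in the sense of Proposition \ref{p4}, and that proposition yields
\[
\ddg(T_\p) = \lsup s^\gi \log\Bigl(\sum_{j=1}^L \tn(s, c_j T_{\tc_j})\Bigr), \qquad \dg(T_\p) = \linf s^\gi \log\Bigl(\sum_{j=1}^L \tn(s, c_j T_{\tc_j})\Bigr).
\]

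Next, I reduce each summand to $\tn(\cdot, T_{\tc_1})$. For $c_j\ne 0$, the trivial scaling $s_n(c_j T_{\tc_j}) = |c_j|\,s_n(T_{\tc_j})$ gives $\tn(s, c_j T_{\tc_j}) = \tn(s/|c_j|, T_{\tc_j}) = \tn(s/|c_j|, T_{\tc_1})$, the last equality by the rotational unitary equivalence noted right after \eqref{e101}. Indices with $c_j=0$ contribute only the constant $2$. Set $M := \|\p_1\|_\lni = \max_j |c_j|$. Using monotonicity of $\tn(\cdot, T_{\tc_1})$, I sandwich the sum as
\[
\tn(s/M, T_{\tc_1}) \;\le\; \sum_{j=1}^L \tn(s, c_j T_{\tc_j}) \;\le\; L\,\tn(s/M, T_{\tc_1}) + 2L.
\]

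Finally, the second claim of Lemma \ref{l1} provides the genuine \emph{limit} $\lim_{s\to 0+} s^\gi \log \tn(s, T_{\tc_1}) = 1$. Consequently $s^\gi \log \tn(s/M, T_{\tc_1}) = M^\gi\,(s/M)^\gi \log \tn(s/M, T_{\tc_1}) \to M^\gi$ as $s\to 0+$, while the additive constants $\log L$ and $2L$ in the logarithm are swept away since $s^\gi\log L \to 0$ and the leading term diverges. Taking $s^\gi\log$ of the sandwich bound and passing separately to $\limsup$ and $\liminf$ produces the same value $M^\gi$, giving $\ddg(T_\p) = \dg(T_\p) = \|\p_1\|^\gi_\lni$.

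The main substantive inputs are already isolated upstream: the asymptotic orthogonality from Theorem \ref{t20} and the existence of the scalar limit in Lemma \ref{l1}. Given these, the only thing to watch is the mechanism by which $\|\cdot\|_\lni$, rather than some averaged $L^p$-norm, appears: a finite and $s$-independent factor $L$ inside the logarithm is negligible against the divergent scale $s^{-\gi}$, so only the largest coefficient $|c_{j_0}| = M$ survives the asymptotics — a feature characteristic of the logarithmic (as opposed to power-law) decay regime.
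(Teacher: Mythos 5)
Your proof is correct and follows essentially the same route as the paper: decompose $T_\p=\sum_j c_jT_{\tc_j}$, invoke Theorem \ref{t20} together with Proposition \ref{p4} to reduce to the sum of counting functions, and then observe that only the maximal coefficient $M=\|\p_1\|_\lni$ survives because a bounded, $s$-independent factor inside the logarithm is killed by the prefactor $s^\gi$. The paper factors out the dominant term $\tn(s,c_1T_{\tc_1})$ and bounds the remaining ratio, while you sandwich the sum between $\tn(s/M,T_{\tc_1})$ and $L\,\tn(s/M,T_{\tc_1})+2L$; these are the same argument in slightly different dress (only the trivial case $M=0$, which the paper also dismisses as trivial, needs a separate word).
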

\begin{proof}
To ease the writing, we suppose that $c_1:=||\p_1||_\lni=\max_{j=1,\dots,L} |c_j|\ge 0$. The case $c_1=0$ being trivial, we assume that $|c_1|>0$. In particular,
$$
\tn(s, c_j T_{\tc_j})\le \tn(s, c_1 T_{\tc_1}),\ j=1,\dots, L,
$$
due to $|c_jT_{\tc_j}|\le |c_1|T_{\tc_j}$.
Applying Theorem \ref{t20} and Proposition \ref{p4} as in \eqref{e10}, we have
\begin{eqnarray*}
\ddg(T_\p)&=&\lsup s^\gi\log\lp\sum^L_{j=1}\tn(s,c_j T_{\tc_j})\rp\\
&=&\lsup s^\gi \log\lp \tn(s,c_1T_{\tc_1})+\sum^L_{j=2} \tn(s, c_jT_{\tc_j})\rp\\
&=&\lsup s^\gi \log \tn(s,c_1T_{\tc_1})\left\{1+\frac{\sum^L_{j=2} \tn(s, c_jT_{\tc_j})}{\tn(s,c_1T_{\tc_1})}\right\}.
\end{eqnarray*}
The term in the figure brackets on the RHS of this relation is greater or equal to one and bounded from above, so we continue as
\begin{eqnarray*}
\dots&=&\lsup s^\gi \log \tn(s,c_1T_{\tc_1})=c_1^\gi \lsup s^\gi\log\tn(s,T_1)\\
&=&c_1^\gi \ddg(T_1)=c^\gi_1.
\end{eqnarray*}
The computation for $\dg(T_\p)$ is completely analogous, and the proof is finished.
\end{proof}

\begin{lemma}\label{l2} For any $\p_1\in\lni$, we have
\begin{equation}\label{e401}
\ddg(T_\p)\le ||\p_1||_\lni^\gi.
\end{equation}
\end{lemma}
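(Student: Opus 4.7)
The natural plan is to factor $T_\p$ so that $\p_1$ appears only as a bounded multiplier, while all the decay is packaged into an auxiliary operator whose singular values are essentially those of $T_{\p_0^2}$ (and hence already controlled by Lemma \ref{l02}). Let $J\colon L^2_a(\bd)\hookrightarrow L^2(\bd)$ denote the inclusion and set
$$
K:=M_{\p_0}J\colon L^2_a(\bd)\to L^2(\bd).
$$
Then $T_\p=(P_+M_{\p_1})K$, and since $\|P_+\|=1$ while multiplication by $\p_1$ on $L^2(\bd)$ has norm $\|\p_1\|_\lni$, inequality \eqref{e02} gives
$$
s_n(T_\p)\le \|\p_1\|_\lni\,s_n(K).
$$

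The singular values of $K$ can be read off at once. Because $\p_0$ is real and radial, $K^*g=P_+(\p_0 g)$ for $g\in L^2(\bd)$, so
$$
K^*K=P_+M_{\p_0^2}J=T_{\p_0^2}.
$$
The symbol $\p_0^2=(1+\log(1/(1-r)))^{-2\g}$ is precisely the radial symbol of Lemma \ref{l02} with $\g$ replaced by $2\g$, and that lemma (applied at parameter $2\g$) yields
$$
\lim_{n\to+\infty}(\log(n+1))^{2\g}s_n(T_{\p_0^2})=1,
$$
hence $s_n(K)=\sqrt{s_n(T_{\p_0^2})}=(\log(n+1))^{-\g}(1+o(1))$.

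Combining the two displays gives $\limsup_{n\to+\infty}(\log(n+1))^{\g}s_n(T_\p)\le \|\p_1\|_\lni$. Translating this $s_n$-statement into the counting-function language via the equivalence between \eqref{e301} and \eqref{e302} (a simple rescaling of $s$ by the factor $\|\p_1\|_\lni$) produces
$$
\ddg(T_\p)=\lsup s^\gi\log\tn(s,T_\p)\le \|\p_1\|_\lni^\gi,
$$
which is exactly \eqref{e401}.

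The only genuinely delicate point is the identification $K^*K=T_{\p_0^2}$, which is a direct computation resting on the reality and radiality of $\p_0$. Everything else is bookkeeping with \eqref{e02} and an invocation of Lemma \ref{l02} at parameter $2\g$. A pleasant feature of this factorisation route is that it never needs to approximate $\p_1$ by step functions (which would be awkward, since piecewise constant functions are not norm-dense in $\lni$), so the argument applies verbatim to any $\p_1\in \lni$, as required by the statement.
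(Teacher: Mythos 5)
Your proof is correct and is essentially the paper's own argument in a slightly different packaging: where you factor $T_\p=(P_+M_{\p_1})(M_{\p_0}J)$ and apply \eqref{e02} together with $K^*K=T_{\p_0^2}=T_{\p_{0,2\g}}$ and Lemma \ref{l02} at parameter $2\g$, the paper obtains the same estimate via the operator inequality $T_\p^*T_\p\le \|\p_1\|^2_\lni P_+\p_0^2P_+$ (assuming $\|\p_1\|_\lni=1$ by homogeneity) and then reduces to $\ddg(T_{\p_0})=1$ from Lemma \ref{l1}. The two routes differ only in bookkeeping (your translation to $\ddg$ is the same rescaling the paper uses via \eqref{e301}--\eqref{e302}), and your side remark that only real-valuedness, not radiality, of $\p_0$ is needed for $K^*K=T_{\p_0^2}$ would be worth stating precisely, but there is no gap.
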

\begin{proof} Observe that inequality \eqref{e401} is homogeneous with respect to $||\p_1||_\lni^\gi$, so we can suppose $||\p_1||_\lni=1$ without loss of generality. Setting $F:=T_\po=P_+\po: L^2_a(\bd)\to L^2_a(\bd)$, we have
\begin{equation*}
\begin{split}
T_\p^*T_\p&=P_+\bar\p P_+\p P_+=P_+ \bar\po\bar\pon P_+\pon\po P_+\le P_+|\pon\po|^2 P_+\\
&\le ||\pon||^2_\lni P_+|\po|^2 P_+\le FF^*,
\end{split}
\end{equation*}
where we used repeatedly that $X^*YX\le ||Y||X^*X$ for bounded operators $X,Y$, and $Y=Y^*$. So we have that
$s^2_n(T_\p)=s_n(T_\p^*T_\p)\le s_n(FF^*)$, where $\{s_n(T_\p)\}_n$ and $\{s_n(FF^*)\}_n$ are singular values of operators $T_\p$ and $FF^*$, respectively. Consequently, 
$$
\tn(s,T_\p)\le\tn(\sqrt s, FF^*),
$$
and we continue as
\begin{eqnarray*}
\ddg(T_\p)&=&\lsup s^\gi\log\tn(s,T_\p)\le \lsup s^\gi\log\tn(\sqrt s,FF^*)\\
&=&\lsup s^{1/(2\g)}\log \tn(s, FF^*)=\dd_{2\g}(FF^*)=\ddg(F)=\ddg(T_\po)=1
\end{eqnarray*}
by the first claim of Lemma \ref{l1}. The proof is finished. \end{proof}

\nt
{\it Proof of Theorem \ref{t1}.} \ Let $\p_1:\bt\to\bc$ be a complex-valued continuous function. For any given $\ep>0$, choose a step function $\ti\p_1$ with the property
$$
||\p_1-\ti\p_1||_\lni <\ep,
$$
and set $\ti\p:=\ti\p_1\p_0$.
Lemma \ref{l2} says that
$$
\ddg(T_\p-T_{\ti\p})=\ddg(T_{\p-\ti\p})\le||\p_1-\ti\p_1||_\lni^\gi\le \ep^\gi.
$$
On the other hand, Proposition \ref{p1} implies
\begin{eqnarray*}
|\lpp\ddg(T_\p)\rpp^{\g/(1+\g)}-\lpp\ddg(T_{\ti\p})\rpp^{\g/(1+\g)}|&\le&\lpp\ddg(T_\p-T_{\ti\p})\rpp^{\g/(1+\g)}<\ep^{1/(1+\g)},\\
|\lpp\dg(T_\p)\rpp^{\g/(1+\g)}-\lpp\dg(T_{\ti\p})\rpp^{\g/(1+\g)}|&\le&\lpp\ddg(T_\p-T_{\ti\p})\rpp^{\g/(1+\g)}<\ep^{1/(1+\g)}.
\end{eqnarray*}
We finish the proof passing to the limit with respect $\ep\to 0$.
\hfill $\Box$

\section{Proof of the asymptotic orthogonality of operators $T_{\tc_j}$ and $T_{\tc_k}, \ j\not =k$}\label{s4}
The purpose of this section is to prove the following theorem. Notice that in the case $\p_0(z)=(1-r)^\g, \ \g>0$, the proof of the similar result is rather simple.
\begin{theorem}\label{t20}
For $j\not=k, \ j,k=1,\dots,L$, the Toeplitz operators $T_{\tc_j}, T_{\tc_k}$, see \eqref{e101}, are asymptotically orthogonal, that is
$$
T_{\tc_k}^*T_{\tc_j}, T_{\tc_k}T_{\tc_j}^*\in \ss^0_{2\g}.
$$
\end{theorem}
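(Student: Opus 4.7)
The scheme is to factorize $T_{\tc_j}$ into ``half-symbol'' pieces and to reduce the asymptotic orthogonality of the family $\{T_{\tc_j}\}$ to a Schatten-class estimate for an explicit integral operator on $L^2(\bd)$.

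\emph{Step 1 (Factorization and reduction).} Define $J_j\colon L^2_a(\bd)\to L^2(\bd)$ by $J_j f := \tc_j^{1/2} f$, with adjoint $J_j^*\colon L^2(\bd)\to L^2_a(\bd)$ given by $J_j^* g = P_+(\tc_j^{1/2} g)$. Since $\tc_j\ge 0$ is real-valued, $T_{\tc_j} = J_j^* J_j$ and $T_{\tc_j}^* = T_{\tc_j}$. Hence
\[
T_{\tc_k}^* T_{\tc_j} = T_{\tc_k} T_{\tc_j} = J_k^* \,(J_k J_j^*)\, J_j.
\]
Applying \eqref{e02} twice, together with $\|\tc_j^{1/2}\|_\infty\le 1$, yields $s_n(T_{\tc_k}^* T_{\tc_j})\le s_n(J_k J_j^*)$. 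It therefore suffices to prove $J_k J_j^*\in\ss^0_{2\g}$ for $j\ne k$; the argument for $T_{\tc_k} T_{\tc_j}^*$ is analogous.

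\emph{Step 2 (Kernel).} The operator $J_k J_j^*\colon L^2(\bd)\to L^2(\bd)$ is an integral operator with kernel
\[
M(z,w)=\chi_{I_k}(\arg z)\,\p_0^{1/2}(|z|)\,\frac{1}{(1-z\bar w)^2}\,\chi_{I_j}(\arg w)\,\p_0^{1/2}(|w|),
\]
supported in the cross-sector $\{\arg z\in I_k\}\times\{\arg w\in I_j\}$.

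\emph{Step 3 (Non-adjacent arcs).} When $I_j,I_k$ are separated by a positive angular gap, $|1-z\bar w|\ge c_L>0$ on $\mathrm{supp}\,M$, so $|M(z,w)|\le C_L\,\p_0^{1/2}(|z|)\p_0^{1/2}(|w|)$ lies in $L^2(\bd\times\bd)$. Hence $J_k J_j^*\in\css_2$, and by Proposition \ref{p3}(1) it lies in $\ss^0_{2\g}$.

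\emph{Step 4 (Adjacent arcs).} The remaining case is $\overline{I_j}\cap\overline{I_k}=\{e^{i\theta_0}\}$. Here $M$ has a non-$L^2$ singularity at $z=w=e^{i\theta_0}$, since the Bergman-kernel blow-up $|1-z\bar w|^{-2}$ is only logarithmically damped by $\p_0^{1/2}$. The plan is a dyadic decomposition at the corner, taking $\Omega_n:=\{z\in\bd:2^{-n-1}<|z-e^{i\theta_0}|\le 2^{-n}\}$, on which $\p_0(|z|)\lesssim n^{-\g}$. Estimating and summing the Schatten norms of the restrictions of $M$ to bi-dyadic blocks $\Omega_n\times\Omega_m$, with sharp estimates of $|1-z\bar w|$ accounting for the angular constraint $z\in I_k, w\in I_j$, should place $J_k J_j^*$ in $\css_p$ for some finite $p$, which by Proposition \ref{p3}(1) gives $J_k J_j^*\in\ss^0_{2\g}$.

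\emph{Main obstacle.} The adjacent-arcs case (Step 4) is the technical heart of the argument. The direct Hilbert--Schmidt estimate fails, in contrast to the polynomial-decay situation of Pushnitski. One promising device is integration by parts in the angular variable based on the explicit antiderivative
\[
\int\frac{d\beta}{(1-z\bar w)^2}=\beta-i\log(1-z\bar w)+\frac{i}{1-z\bar w},
\]
which converts the $\beta$-integral of $M$ over $I_j$ into boundary terms with milder (logarithmic or first-order polar) singularities that are more amenable to Schatten-class analysis.
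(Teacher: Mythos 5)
Your Steps 1--3 are correct and amount to the same reductions the paper makes: the half-symbol factorization through $J_j$ plays the role of the paper's passage from $T_{\tc_k}^*T_{\tc_j}$ to the positive operator $D=\o_L P_+\o_1^2 P_+\o_L$ of \eqref{e12}, and your non-adjacent case is Lemma \ref{l3} (bounded kernel, hence Hilbert--Schmidt, hence in $\ss^0_{2\g}$ by Proposition \ref{p3}). The problem is Step 4: for adjacent arcs you give only a plan --- ``estimating and summing the Schatten norms of the bi-dyadic blocks \dots should place $J_kJ_j^*$ in $\css_p$'' --- and this is precisely where all the work of the theorem lies; it is what separates the logarithmic scale from Pushnitski's power scale, as the paper notes at the start of Section \ref{s4}. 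In the paper this step occupies Sections \ref{s43}--\ref{s44}: the iterated kernels $D_m$ of Proposition \ref{p5}, the polar coordinates centred at the common corner together with the key lower bound $|1-z_{2j-1}z_{2j}|\ge C(r_{2j-1}+r_{2j})$ of Proposition \ref{p6}, the combinatorial Lemma \ref{l4}, and finally $D\in\css_{2m}$ for $8m\g>1$ (Proposition \ref{p7}). Your sketch contains no estimate of comparable substance, and the angular integration-by-parts ``device'' is mentioned but never used; so, as written, the proposal does not prove the theorem.

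Moreover, the missing step is not routine bookkeeping. With your half-symbols, a diagonal block $\Omega_n\times\Omega_n$ of the kernel $M$ has Hilbert--Schmidt norm comparable to $n^{-\g}$ (there $\p_0^{1/2}(|z|)\,\p_0^{1/2}(|w|)\lesssim n^{-\g}$ while $|1-z\bar w|\asymp 2^{-n}$), so summing block norms by the triangle inequality diverges whenever $\g\le 1$: a naive ``estimate and sum the blocks'' cannot work uniformly in $\g$. To make the plan succeed one must exploit that blocks with distinct indices have disjoint supports in both variables, so that along each diagonal $n-m=d$ the $\css_p$-quasinorms add in $p$-th power, and then take $p$ large, roughly $p>1/\g$ --- the exact counterpart of the paper's need to pass to high powers $D^m$ with $8m\g>1$. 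If you carried this out (including the two-sided estimate $|1-z\bar w|\asymp (1-|z|)+(1-|w|)+|\arg z-\arg w|$ on the two adjacent sectors), you would obtain a genuinely different and arguably more elementary proof of the adjacent-arcs case than the paper's iterated-kernel argument; but none of this is in the proposal, so the central claim $J_kJ_j^*\in\css_p$ remains unproven and the proof of Theorem \ref{t20} is incomplete.
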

By default, we assume that $j\not=k, \ j,k=1,\dots,L$ throughout this section. We present the argument for the operators $T_{\tc_k}^*T_{\tc_j}$, the reasoning for $T_{\tc_k}T_{\tc_j}^*$ is completely similar.

Since the proof of Theorem \ref{t20} in logarithmic case is rather involved, it is divided into a few steps.

\subsection{STEP 1. Toeplitz operators with symbols which are compactly supported in $\bd$}\label{s41}
Take a small $0<\d<1/2$ and define the characteristic function of $\{z: |z|\le 1-\d\}$,
\begin{equation*}
\chi_{00}(z)=
\left\{
\begin{array}{ll}
1,& |z|\le 1-\d\\
0,& |z|>1-\d
\end{array}
\right. ,
\quad \chi_{01}(z)=1-\chi_{00}(z),\quad z\in\bd.
\end{equation*}
Then, write $T_{\tc_j}$ as $T_{\tc_j}=T_{\cho\tc_j}+T_{\chn\tc_j}$. Since the $\mathrm{supp}\, (\cho\tc_j)$ is compact in $\bd$,  the singular values $\{s_n(T_{\cho\tc_j})\}_n$ 
decay exponentially, that is, there is a constant $C=C(\d)>0$ such that
$$
s_n(T_{\cho\tc_j})\le C (1-\d)^{2n},
$$
see Zhu \cite[Chap. 7]{zh1}. This bound is almost trivial and it follows from the estimate of the quadratic form of the operator $(T_{\cho\tc_j}$ with the help of explicit expression \eqref{e0012} for the projection $P_+$. 
We have 
$$
T_{\tc_k}^*T_{\tc_j}=T_{\cho\tc_k}^*T_{\cho\tc_j}+T_{\cho\tc_k}^*T_{\chn\tc_j}+T_{\chn\tc_k}^*T_{\cho\tc_j}+T_{\chn\tc_k}^*T_{\chn\tc_j}.
$$
All operators $T_{\tc_j}, T_{\cho\tc_j}, T_{\chn\tc_j}$ are bounded and so the singular values of the first three operator products on the RHS of the above relation decay exponentially as well. That is, proving that $T_{\tc_k}^*T_{\tc_j}\in \ss^0_{2\g}$ is equivalent to saying that $T_{\chn\tc_k}^*T_{\chn\tc_j}\in \ss^0_{2\g}$.

\subsection{STEP 2. Products of Toeplitz operators with smooth integral kernel}\label{s42}
Second, let $B_0=T_{\chn\tc_k}^*T_{\chn\tc_j}$. Recalling point (1) of Proposition \ref{p3}, we wish to prove $B_0\in\css_p\subset \ss^0_{2\g}$ for some $p>0$.

The above conclusion is rather simple to obtain in the case when $j<k-1$ or $j>k+1$, and, as always, $j,k=1,\dots, L$. This means that the intervals $I_j$ and $I_k$ do not touch. 
\begin{lemma}\label{l3} Let $j<k-1$ or $j>k+1$. Then $T_{\chn\tc_k}^*T_{\chn\tc_j}\in\css_2$, the Hilbert-Schmidt class.
\end{lemma}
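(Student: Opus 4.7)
The plan is to exploit the fact that $\chn\tc_k$ is real-valued, so $T_{\chn\tc_k}^{*}=T_{\chn\tc_k}$ and the operator in question is simply the product $T_{\chn\tc_k}T_{\chn\tc_j}$. I will realize this product as the restriction to $L^2_a(\bd)$ of a genuine Hilbert--Schmidt integral operator on the ambient $L^2(\bd)$, and bound its integral kernel by elementary geometry.

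Concretely, since $P_+:L^2(\bd)\to L^2_a(\bd)$ is the integral operator with Bergman kernel $(1-z\bar w)^{-2}$, see \eqref{e0012}, I factor
\begin{equation*}
T_{\chn\tc_k}T_{\chn\tc_j}=P_+\cdot M_{\chn\tc_k}\,P_+\,M_{\chn\tc_j}\cdot\iota,
\end{equation*}
where $M_\p$ denotes multiplication by $\p$ and $\iota:L^2_a(\bd)\hookrightarrow L^2(\bd)$ is the inclusion. The middle factor $B:=M_{\chn\tc_k}P_+M_{\chn\tc_j}:L^2(\bd)\to L^2(\bd)$ is then an honest integral operator on the full space $L^2(\bd)$, with explicit kernel
\begin{equation*}
\tilde\kappa(z,w):=\chn(z)\tc_k(z)\cdot\frac{1}{(1-z\bar w)^2}\cdot\chn(w)\tc_j(w).
\end{equation*}
Since composing with the contractions $P_+$ and $\iota$ cannot increase the Hilbert--Schmidt norm, the claim $T_{\chn\tc_k}T_{\chn\tc_j}\in\css_2$ reduces to showing $\tilde\kappa\in L^2(\bd\times\bd)$.

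The key geometric observation is that on the support of $\tilde\kappa$ one has $\arg z\in I_k$ and $\arg w\in I_j$; the hypothesis $|j-k|\ge 2$ together with the cyclic arrangement of the partition forces the angular distance $|\arg z-\arg w|$ to be bounded below by a positive constant $c=c(L)>0$. Combined with $|z|,|w|\in(1-\d,1)$ coming from the cutoff $\chn$, the elementary identity
\begin{equation*}
|1-z\bar w|^2=(1-|z||w|)^2+4|z||w|\sin^2\!\Big(\tfrac{\arg z-\arg w}{2}\Big)
\end{equation*}
then yields a strictly positive lower bound on $|1-z\bar w|$ depending only on $\d$ and $L$. Hence $\tilde\kappa$ is a bounded function with support of finite area, so $\tilde\kappa\in L^\infty\cap L^2(\bd\times\bd)$, which finishes the argument. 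There is no real obstacle here: the entire point is that the strict separation of the arcs $I_j$ and $I_k$ keeps the singularity of the Bergman kernel $(1-z\bar w)^{-2}$ uniformly away from the joint support of the two symbols, and everything else is routine bookkeeping.
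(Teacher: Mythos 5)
Your route is the paper's route: the paper likewise writes $T_{\chn\tc_k}^*T_{\chn\tc_j}=P_+\lp\chn\tc_k\, P_+\,\chn\tc_j\rp P_+$, discards the outer factors $P_+$ by \eqref{e02} (they are contractions), and shows the middle factor is Hilbert--Schmidt because, by \eqref{e0012}, it is an integral operator on $L^2(\bd)$ with kernel $\chi_{S_k}(w)\p_0(w)\,(1-w\bar z)^{-2}\,\chi_{S_j}(z)\p_0(z)$, which is bounded (hence square-integrable on the finite-area product of supports) as soon as the two supports are at positive distance. So the decomposition, the reduction, and the kernel estimate coincide with the paper's proof, and under the intended reading of the hypothesis your argument is complete.

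One assertion in your write-up is false as stated and needs rewording: it is not true that $|j-k|\ge 2$ ``together with the cyclic arrangement'' forces a positive lower bound on the angular distance. For $\{j,k\}=\{1,L\}$ the hypothesis $j<k-1$ is satisfied, yet the arcs $I_1=[0,2\pi/L)$ and $I_L=[2\pi(L-1)/L,2\pi)$ touch at $0\equiv 2\pi$; near that point $\sin^2\!\lp(\arg z-\arg w)/2\rp$ degenerates, $|1-z\bar w|$ is not bounded away from $0$, and the kernel is unbounded. This is not a removable technicality: for touching arcs the Hilbert--Schmidt property genuinely fails in general, which is exactly why the paper devotes the lengthy Subsections \ref{s43}--\ref{s44} to the neighboring case, explicitly including the wrap-around pair $k=1$, $j=L$ there. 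The paper's own formulation of the lemma has the same literal defect (its hypothesis also admits $\{1,L\}$ while its proof asserts that $\dist(S_j,S_k)>0$), but its intent --- arcs that do not touch on $\bt$, i.e. $2\le |j-k|\le L-2$ --- is clear from the surrounding text. State the hypothesis in that cyclic form; then the angular separation is at least $2\pi/L$, your lower bound on $|1-z\bar w|$ holds, and the rest of your argument (including the harmless use of $T_{\chn\tc_k}^*=T_{\chn\tc_k}$ for the real symbol) goes through verbatim.
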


\begin{proof} The proof is well-known and quite simple, see Pushnitski \cite{pu1}. Set
$$
S_j:=\{z\in\bd: z=re^{i\th}, r\in (1-\d,1), \th\in I_j\},\quad j=1,\dots, L.
$$
By definition, we have $\chi_{S_j}=\chn\chi_j$ and $\ti\chi_{S_j}:=\chn\ti\chi_j$. Consequently,
$$
T_{\ti\chi_{S_k}}^* T_{\ti\chi_{S_j}}=P_+\ti\chi_{S_k} P_+\ti\chi_{S_j} P_+=P_+ (\ti\chi_{S_k} P_+\ti\chi_{S_j}) P_+.
$$
Since $||P_+||=1$, we prove that the operator $(\ti\chi_{S_k} P_+\ti\chi_{S_j})$ belongs to $\css_2$, and this will give the claim of the lemma by \eqref{e02} .
Indeed, the integral operator $(\ti\chi_{S_k} P_+\ti\chi_{S_j})$ can be written by \eqref{e0012} as
\begin{equation*}
\begin{split}
((\ti\chi_{S_k} P_+\ti\chi_{S_j})f)(w)&:=\int_\bd K(w,z) f(z)\, dA(z)\\
&:=\int_\bd \frac{\chi_{S_k}(w)\p_0(w)\chi_{S_j}(z)\p_0(z)}{(1-w\bar z)^2} f(z)\, dA(z),\quad f\in A^2(\bd).
\end{split}
\end{equation*}
Since the distance between regions $S_k$ and $S_j$ is strictly positive, the integral kernel $K(w,z)$ of the operator is a bounded function. Consequently, the kernel $K(.,.)$ of the operator lies in $L^2(\bd\times\bd; dA(w)\wedge dA(z)),$ and so the operator  $\ti\chi_{S_k} P_+\ti\chi_{S_j}$ is Hilbert-Schmidt, see \cite[Chap. 11, Sect. 3]{bs1}. The lemma is proved.
\end{proof}

\subsection{STEP 3. The case of $T_{\tc_k}^*T_{\tc_j}$ with neighboring intervals, $k=j\pm 1$; a bound on a kernel of an integral operator}\label{s43}
The arguments of the previous subsections show that it remains to prove that 
$B_0:=T_{\chn\tc_k}^*T_{\chn\tc_j}\in \ss^0_{2\g}$ for $k=j\pm1,\ j,k=1,\dots,L$. By rotation, assume WLOG that $k=1$ and $j=L$. To simplify the notation, we set $\d=2\pi/L$ and
\begin{eqnarray*}
\o_1&:=&\chn\tc_1=\chi_{S_1}\p_0, \quad S_1:=\lt\{z=re^{i\th}: r\in (1-\d,1),\th\in \lp0,\frac{2\pi}L\rp\rt\},\\
\o_L&:=&\chn\tc_L=\chi_{S_L}\p_0,\quad S_L:=\lt\{z=re^{i\th}: r\in (1-\d,1),\th\in \lp\frac{2\pi(L-1)}{L},2\pi\rp\rt\}.
\end{eqnarray*}
Alternatively, one can write the sets $S_1$ and $S_L$ as
\begin{equation*}
\begin{split}
&S_1:=\{z=re^{i\th}: r\in (1-\d,1),\th\in (0,\d)\},\\
&S_L:=\{z=re^{i\th}: r\in (1-\d,1),\th\in (-\d,0)\},
\end{split}
\end{equation*}
so that $S_L=\bar S_1$ by complex conjugation. We put $B_0=T^*_{\o_1}T_{\o_L}$, and
\begin{eqnarray}
0&\le&D_0:=B_0^*B_0=(T_{\o_1}^*T_{\o_L})^*(T_{\o_1}^*T_{\o_L})=T^*_{\o_L}T_{\o_1}T^*_{\o_1}T_{\o_L}\nn \\
&=&P_+\o_LP_+\o_1P_+\o_1P_+\o_L P_+\label{e11} 
\end{eqnarray}
Recalling inequality \eqref{e02}, we can omit two projections $P_+$ bordering the latter expression on the left and on the right. Consequently, the singular values (or eigenvalues) $s_n(D_0)$ are bounded from above by singular values (eigenvalues) $s_n(D)$ of the positive operator $D$ defined as
\begin{equation}\label{e12}
0\le\o_LP_+\o_1 P_+\o_1P_+\o_L\le D:=\o_LP_+\o_1^2P_+\o_L,
\end{equation}
where we used that $X^*YX\le ||Y||X^*X$ for two bounded operators $X$ and $Y, Y^*=Y$.

Now, we are interested in upper bounds on integral kernels for operators $D, D^m$ with a natural $m>0$. Let
\begin{eqnarray*}
(Df)(z_1)&:=&(D^1f)(z_1):=\int_{\bd, z_3} D_1(z_1,z_3)f(z_3)\, dA(z_3),\\
(D^m f)(z_1)&:=&\int_{\bd, z_{2m+1}} D_m(z_1,z_{2m+1})f(z_{2m+1})\, dA(z_{2m+1})\\
&=&\int_{\bd,z_{2m+1}}\lp\ldots\int_{\bd,z_3} \underbrace{D_1(z_1,z_3)\ldots D_1(z_{2m-1},z_{2m+1})}_{m}\rp
f(z_{2m+1})\\
&& \ \prod^m_{j=1}dA(z_{2j+1}), 
\end{eqnarray*}
the notation for indices $z_j$ will be made clear a bit later. When needed, we shall indicate the variable of the integration as a sub-index of the integral as it is written above.

\begin{proposition}\label{p5} We have
\begin{equation}\label{e16}
\begin{split}
&D_1(z_1,z_3)=\int_{\bd, z_2}\frac{\tcsl(z_1)\tcso(z_2)^2\tcsl(z_3)}{(1-z_1\bar z_2)^2(1-z_2\bar z_3)^2}\, dA(z_2),\\
&D_m(z_1,z_{2m+1})=\underbrace{\int_{\bd,z_2}\dots\int_{\bd,z_{2m}}}_{2m-1}
\frac{\prod_{j=1}^m \tcsl(z_{2j-1})\tcso(z_{2j})^2\tcsl(z_{2j+1})}
{\prod_{j=1}^m (1-z_{2j-1}\bar z_{2j})^2(1-z_{2j}\bar z_{2j+1})^2}\, \prod_{j=2}^{2m}dA(z_j).
\end{split}
\end{equation}
\end{proposition}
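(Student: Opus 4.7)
The plan is a direct computation. The idea is to unwind the composition $D = \omega_L P_+ \omega_1^2 P_+ \omega_L$ using the explicit integral-kernel formula \eqref{e0012} for the Bergman projection, and then to iterate to obtain $D_m$ by induction on $m$.

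For the base case $m=1$, I substitute the kernel of $P_+$ twice into $(Df)(z_1)$ to get
$$(Df)(z_1) = \tcsl(z_1)\int_{\bd}\frac{\tcso(z_2)^2}{(1-z_1\bar z_2)^2}\left(\int_{\bd}\frac{\tcsl(z_3)\,f(z_3)}{(1-z_2\bar z_3)^2}\, dA(z_3)\right)dA(z_2).$$
An application of Fubini's theorem to interchange the $dA(z_2)$ and $dA(z_3)$ integrations then lets me read off $D_1(z_1,z_3)$ exactly as displayed; the factor $\tcsl(z_1)$ sits outside the $z_2$-integral and $\tcsl(z_3)$ is absorbed into the kernel since it does not depend on $z_2$. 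For the inductive step I would use the semigroup identity
$$(D^m f)(z_1) = \int_{\bd} D_1(z_1,z_3)\,(D^{m-1}f)(z_3)\, dA(z_3),$$
substitute the inductive form of $D_{m-1}(z_3,z_{2m+1})$, and apply Fubini once more to combine the $(2m-3)$-fold integral in $D_{m-1}$ with the single $dA(z_3)$ and the $dA(z_2)$ from $D_1$. Relabeling dummy variables as $z_2,\dots,z_{2m}$ and grouping the kernel factors in blocks $(1{-}z_{2j-1}\bar z_{2j})^{-2}(1{-}z_{2j}\bar z_{2j+1})^{-2}$ for $j=1,\dots,m$ gives exactly the claimed product, with the measure $\prod_{j=2}^{2m} dA(z_j)$.

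The only point that genuinely requires care is the justification of Fubini, since $(1-z\bar w)^{-2}$ is unbounded near the boundary diagonal and both $S_1$ and $S_L$ meet $\bt$. The Bergman kernel is singular only where $z$ and $w$ simultaneously approach the same point of $\bt$, while the sectors $S_1$ and $S_L$ share only the corner $\theta=0$; hence the iterated integrand is pointwise finite almost everywhere, and the real issue is absolute integrability. I would handle this by a standard exhaustion: first restrict $S_1$ and $S_L$ to the compact subsets $S_1\cap\{|z|\le 1-\eta\}$ and $S_L\cap\{|z|\le 1-\eta\}$, where the Bergman kernel factors are uniformly bounded and both Fubini and the kernel identities are trivial; then pass to the limit $\eta\to 0+$, invoking that $D$ is a bounded operator on $L^2_a(\bd)$ (so the cut-off integral operators converge strongly to $D$) together with monotone/dominated convergence for the iterated integrals. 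This boundary-singularity bookkeeping is the only nontrivial step; the remainder of the argument is symbol-pushing.
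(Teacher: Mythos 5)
Your proposal is correct and follows essentially the same route as the paper's proof: unwind $D=\o_LP_+\o_1^2P_+\o_L$ using the explicit kernel \eqref{e0012}, read off $D_1$ via Fubini--Tonelli, and obtain $D_m$ by an elementary induction through $D^m=D\,D^{m-1}$. Your extra care in justifying the interchange of integrals (cut-off/exhaustion near $\bt$) is finer bookkeeping than the paper records, which simply invokes Fubini--Tonelli, but it is consistent with the bounds established later in Proposition \ref{p6}.
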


\begin{proof}
Relations \eqref{e11}, \eqref{e12} and the form of the Riesz orthoprojector $P_+$ in the Bergman space \eqref{e0012} give that, for $f\in L^2_a(\bd)$,
\begin{eqnarray}
Df(z_1)&=&\int_{\bd, z_2}\frac{\tcsl(z_1)\tcso(z_2)}{(1-z_1\bar z_2)^2}\, \lp\int_{\bd, z_3}\frac{\tcso(z_2)\tcsl(z_3)}{(1-z_2\bar z_3)^2}\, f(z_3) dA(z_3)\rp\, dA(z_2)\nn \\
&=&\int_{\bd, z_3} D_1(z_1,z_3)f(z_3)\, dA(z_3), \label{e13}
\end{eqnarray}
where, by Fubini-Tonelli theorem,
\begin{eqnarray}
D_1(z_1,z_3)&=&\int_{\bd, z_2}\frac{\tcsl(z_1)\tcso(z_2)^2\tcsl(z_3)}
{(1-z_1\bar z_2)^2(1-z_2\bar z_3)^2}\, dA(z_2). \label{e14}
\end{eqnarray}
This is the first equality in \eqref{e16}; the second one is proved similarly by an elementary induction.
\end{proof}
We stress that $z_{2j}\in S_1, \ j=1,\dots, m$, and $z_{2j+1}\in S_L, \ j=0,\dots, m$, in the second relation \eqref{e16}.

Before going to the proof of a coming proposition,we introduce some notation.  First, set
\begin{equation}\label{e15}
\psi_0(r):=\frac 1{\lp 1+1/\log r\rp^\g}, \quad 1/2<r<1.
\end{equation}
Set $\dh=\sqrt 2\d$ and define slightly different domains
\begin{eqnarray}\label{e150}
\shl&:=&\{z: |z|<1, |z-1|<\dh, \ \im z<0\} \\
&=&\{z=1-re^{i\th}: 0\le r<\min\{\dh, 2\cos\th\}, \ \th\in (0,\pi/2)\}, \nn
\end{eqnarray}
see Figure \ref{fig1}. It is clear that $S_L\subset \shl$, and so $\chi_{S_L}\le \chi_\shl$. The region $\sho$ is defined for $S_1$ in the same manner. 

Now, it is convenient to make the polar change of variables; notice that the new variables $(r_j,\th_j)$ are centered at the point $z_0=1$, and not the origin $z_0=0$. More precisely, we set
\begin{eqnarray}\label{e17}
&& z_{2j+1}:=1-r_{2j+1}e^{-i\th_{2j+1}}\in\sho, \ j=0,\dots,m,\\
&& z_{2j}:=1-r_{2j}e^{i\th_{2j}}\in\shl,  \ j=1,\dots,m.\nn
\end{eqnarray}
We have $z_{2j+1}\in\sho$ and $z_{2j}\in\shl$ for $r_k\in (0,\sqrt2\d), \th_k\in (0,\pi/2),\ k=1,\dots, 2m+1$.

\begin{figure}[tbp]
\includegraphics[width=14cm]{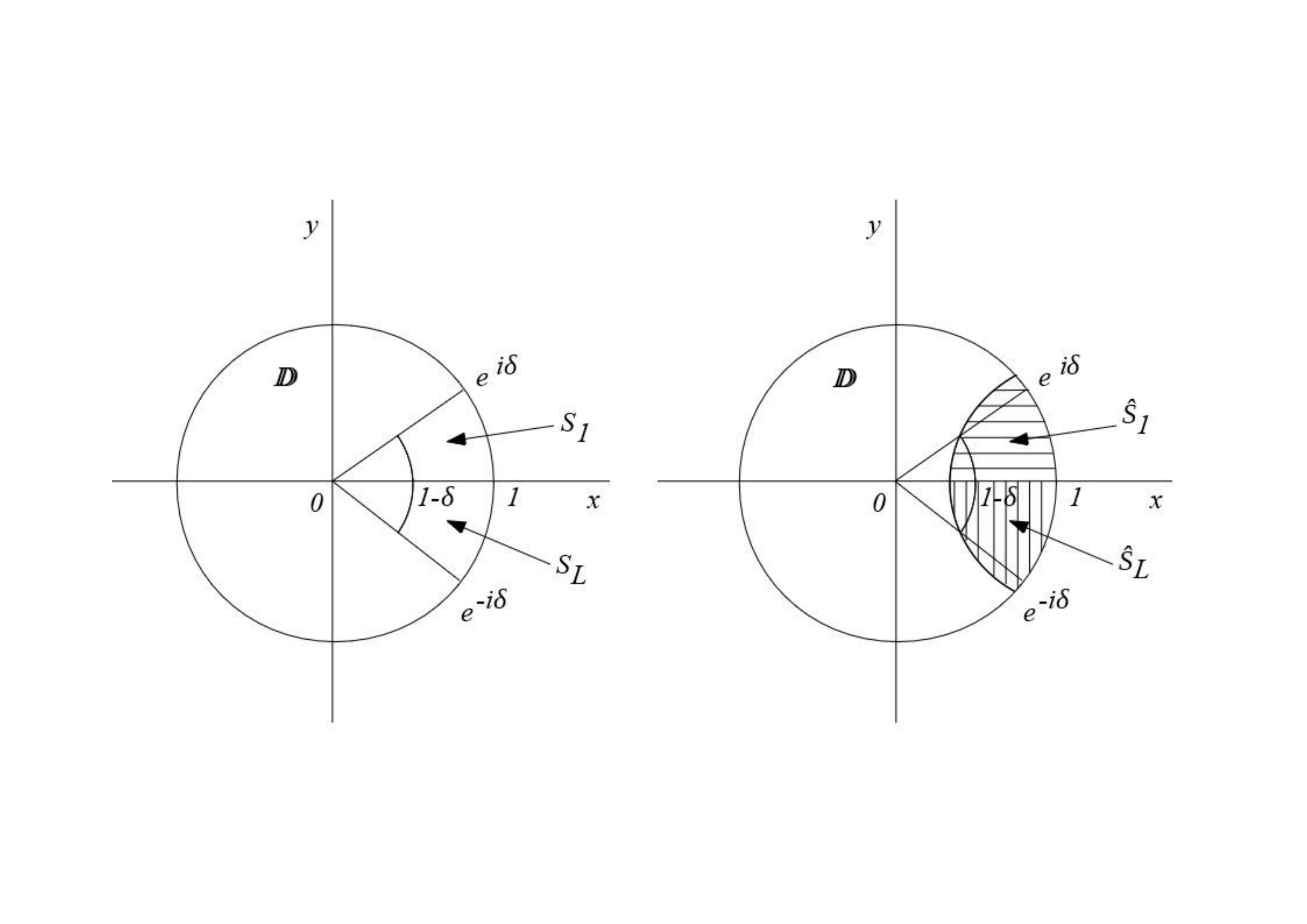}
\vspace{-2cm}
\caption{Domains $S_1, S_L$, and $\hat S_1, \hat S_L$.}
\label{fig1}
\end{figure}

\begin{proposition}\label{p6} We have the following bound on kernels $D_m$ \eqref{e16} in terms of the variables introduced in \eqref{e17}
\begin{equation}\label{e18}
\begin{split}
&|D_m(z_1,z_{2m+1})|\\
&\le 
C\underbrace{\int^\dh_{0,r_2}\dots\int^\dh_{0,r_{2m}}}_{2m-1}
\frac{\chi_\shl(z_1)\psi_0(r_1)\, \lp\prod^{2m}_{j=2}\psi_0(r_j)^2\rp\, \chi_\shl(z_{2m+1})\psi_0(r_{2m+1})}
{\prod^{2m}_{j=1}(r_j+r_{j+1})}\, \prod^{2m}_{j=2}dr_j.
\end{split}
\end{equation}
\end{proposition}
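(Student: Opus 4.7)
\textbf{Proof plan for Proposition \ref{p6}.} We start from the iterated-integral representation \eqref{e16} and proceed through the following reductions. First, take absolute values inside the integrand, replacing each factor $(1-z_{l-1}\bar z_l)^{-2}$ by $|1-z_{l-1}\bar z_l|^{-2}$, which yields an upper bound for $|D_m(z_1,z_{2m+1})|$. Second, since $S_1\subset\hat S_1$ and $S_L\subset\hat S_L$, enlarge the effective domain of integration using $\chi_{S_1}\le\chi_{\hat S_1}$ and $\chi_{S_L}\le\chi_{\hat S_L}$, so that all variables are restricted to the wedge-shaped neighborhoods of the boundary point $z=1$ on which the polar parametrization \eqref{e17} is well defined.

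The heart of the argument is the polar change of variables centered at $z=1$ (rather than at the origin). Under this change the Lebesgue measure becomes $dA(z_l)=r_l\,dr_l\,d\theta_l/\pi$, contributing a Jacobian factor $r_l$ for each integrated variable $l=2,\ldots,2m$. In the new coordinates the radial weight simplifies, $\varphi_0(z_l)\le C\,\psi_0(r_l)$ with $\psi_0$ as in \eqref{e15}: for $z_l$ close to $1$ with $\theta_l$ bounded away from $\pi/2$ one has $1-|z_l|\asymp r_l\cos\theta_l$, so the logarithmic factors defining $\varphi_0$ and $\psi_0$ coincide up to a multiplicative constant. The critical geometric input is the pointwise lower bound
$$
|1-z_l\bar z_{l+1}|^2 \;\ge\; C\,(r_l+r_{l+1})^2, \qquad l=1,\ldots,2m,
$$
which follows from the fact that consecutive $z_l,z_{l+1}$ sit in opposite half-planes (the odd- and even-indexed variables alternate between $\hat S_1$ and $\hat S_L$): writing $1-z_l\bar z_{l+1}=r_l e^{i\ast}+r_{l+1}e^{i\ast}+O(r_l r_{l+1})$, the two leading exponentials have arguments differing by less than $\pi/2$ in modulus, so $|r_l e^{i\ast}+r_{l+1}e^{i\ast}|^2\ge r_l^2+r_{l+1}^2\ge (r_l+r_{l+1})^2/2$.

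Finally, integrate out the angular variables $\theta_l$ for $l=2,\ldots,2m$; each ranges over the bounded interval $(0,\pi/2)$ and contributes only a bounded multiplicative constant. Combining the $\psi_0$-weights, the Jacobian factors and the lower bound above then yields the bound \eqref{e18}. The main technical obstacle is the last bookkeeping step: the naive pointwise estimate together with only $2m-1$ Jacobian factors $r_l$ initially produces the stronger denominator $\prod_{j=1}^{2m}(r_j+r_{j+1})^2$, so one must carefully redistribute the $r_l$-factors against the pole factors (using elementary inequalities like $r_l\le r_{l-1}+r_l$ and $r_l\le r_l+r_{l+1}$, or a sharper evaluation of the angular integrations exploiting the explicit form of $|1-z_l\bar z_{l+1}|^2$) to arrive at the single-power denominator $\prod_{j=1}^{2m}(r_j+r_{j+1})$ appearing in \eqref{e18}.
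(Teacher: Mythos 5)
Your plan reproduces the paper's proof of Proposition \ref{p6} step for step: take moduli in \eqref{e16}, enlarge $S_1,S_L$ to $\sho,\shl$, pass to the polar variables \eqref{e17} centered at $z=1$, use the no-cancellation bound $|1-z_l\bar z_{l+1}|\ge C(r_l+r_{l+1})$ (the paper implements your ``opposite half-planes'' observation by the substitution $z_{2j}\mapsto \bar z_{2j}$, after which both leading exponentials have arguments in $(0,\pi/2)$ and the cross term $O(r_lr_{l+1})$ is absorbed for $\dh$ small), bound the radial weight by $\psi_0$, and integrate out the angles at the cost of a factor $(\pi/2)^{2m-1}$. One local correction: your justification of $\p_0(|z_l|)\le C\psi_0(r_l)$ via $1-|z_l|\asymp r_l\cos\th_l$ ``with $\th_l$ bounded away from $\pi/2$'' is neither available (on $\sho,\shl$ the angle runs up to $\pi/2$, where the two-sided comparison degenerates) nor needed: the one-sided inequality $1-|z_l|\le|1-z_l|=r_l$ plus the monotonicity of $\p_0$ gives $\p_0(|z_l|)\le\psi_0(r_l)$ with constant $1$ on the whole wedge, which is exactly what the paper uses.

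The ``main technical obstacle'' you leave open is closed in the paper by precisely your first suggestion and nothing sharper: each Jacobian factor $r_{j+1}$, $j=1,\dots,2m-1$, is paired with a squared pole factor through $r_{j+1}/(r_j+r_{j+1})^2\le 1/(r_j+r_{j+1})$; no refined angular integration is invoked. Be aware, however, that this pairing consumes the $2m-1$ Jacobians against $2m$ squared factors, so one factor (say $(r_{2m}+r_{2m+1})^2$) has no partner at this stage, while \eqref{e18} displays first powers throughout --- the paper is terse at exactly the point you flagged. The robust way to finish your argument is to keep that one squared factor in the conclusion (this is what the pointwise estimate actually yields) and note that it is harmless downstream: in Proposition \ref{p7} the outer integrations in $z_1$ and $z_{2m+1}$ supply the extra factors $r_1$ and $r_{2m+1}$, which reduce the surviving squares in the two kernel copies, and the simplex/telescoping computation there goes through unchanged (compare the square $(r_1+r_2)^2$ that still appears in \eqref{e20}).
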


\begin{proof}
First, we take the modulus under the integral in the second relation \eqref{e16}. Since $S_1\subset\sho$ and $S_L\subset\shl$, we see
$\tc_{S_1}\le \tc_\sho$ and $\tc_{S_L}\le \tc_\shl$, so we can replace $\tc_{S_1}, \tc_{S_L}$ with $\tc_\sho,\tc_\shl$
under the integral. Second, to uniformize the notation, we make the change of variables
$$
z_{2j}\in\sho \mapsto \bar z_{2j}\in\shl.
$$
In this way, we have that $z_k=1-r_ke^{i\th}\in\shl$ for all $k=2,\dots, 2m$.

Furthermore, we get with the help of the new variable $z_{2j}$
$$
1-z_{2j-1}z_{2j}=r_{2j-1}e^{i\th_{2j-1}}+r_{2j}e^{i\th_{2j}}-r_{2j-1}r_{2j}e^{i(\th_{2j-1}+\th_{2j})}.
$$
Consequently, there is a $C=C(\dh)>0$ such that
$$
|1-z_{2j-1}z_{2j}|\ge C(r_{2j-1}+r_{2j}).
$$
Remind that $\tc_\shl(z)=\chi_\shl(z)\p_0(|z|)$. Concerning these factors, we notice that $1-|z_k|\le |1-z_k|=r_k, \ k=2,\dots, 2m$, and so
$$
\p_0(z_k)\le \psi_0(r_k).
$$
Plugging all these bounds into the second relation \eqref{e16}, we obtain
\begin{equation*}
\begin{split}
&|D_m(z_1,z_{2m+1})|\le\underbrace{\int_{\bd,z_2}\dots\int_{\bd,z_{2m}}}_{2m-1}
\frac{\prod_{j=1}^m \tc_\shl(z_{2j-1})\tc_\shl(z_{2j})^2\tc_\shl(z_{2j+1})}
{\prod_{j=1}^m |1-z_{2j-1} z_{2j}|^2 |1-z_{2j}z_{2j+1}|^2}\, \prod_{j=2}^{2m}dA(z_j)\\
&\le C \underbrace{\int^{\pi/2}_{0,\th_2}\int^\dh_{0,r_2}r_2\, dr_2d\th_2\, \dots
\int^{\pi/2}_{0,\th_{2m}}\int^\dh_{0,r_{2m}}r_{2m}\, dr_{2m}d\th_{2m}}_{2m-1}
\frac{\tc_\shl(z_1)\, \lp\prod_{j=2}^{2m}\psi_0(r_j)^2\rp\, \tc_\shl(z_{2m+1}) }
{\prod_{j=1}^{2m} (r_j+r_{j+1})^2}\\
&\le C \lp\frac\pi 2\rp^{2m-1} \underbrace{\int^\dh_0 \dots\int^\dh_0}_{2m-1}
\frac{\tc_\shl(z_1)\, \lp\prod_{j=2}^{2m}\psi_0(r_j)^2\rp\, \tc_\shl(z_{2m+1}) }
{\prod_{j=1}^{2m} (r_j+r_{j+1})}\, \prod_{j=2}^{2m}dr_j,
\end{split}
\end{equation*}
where we used that $r_{j+1}/(r_j+r_{j+1})^2\le 1/(r_j+r_{j+1})$. The proposition is proved.
\end{proof}

\subsection{STEP 4. The case of $T_{\tc_k}^*T_{\tc_j}$ with $k=j\pm 1$; an auxiliary combinatorial lemma and the final computation}\label{s44}
The integral from the RHS of \eqref{e18} is computed on a cube $(r_2,\dots, r_{2m})\in \ccs=(0,\dh)^{2m-1}$. Roughly speaking, the main idea for the calculation of this subsection is to divide the cube in standard simplexes
$$
\ccs_{i_2,\dots,i_{2m}}=\{(r_2,\dots,r_{2m})\in\ccs: r_{i_2}\in (0,\dh),\ \dh>r_{i_2}>\dots>r_{i_{2m}}>0\},
$$
and to obtain an appropriate bound for \eqref{e18} from above integrating on every simplex. Here, $\{i_j\}_{j=2,3,\ldots, 2m}$ is a transposition of the set $\{2,3,\ldots, 2m\}$. 

\begin{lemma}\label{l4} Let $r_j\ge 0, \ j=1,\dots,2l$, where $l>0$ is a natural number.  Then
\begin{equation}\label{e19}
\begin{split}
&(r_1+r_2)\lp \prod^{l-1}_{j=2}(r_j+r_{j+1})\rp\, (r_l+r_{l+1})\cdot
(r_1+r_{l+2})\lp \prod^{2l-1}_{j=l+2}(r_j+r_{j+1})\rp (r_{2l}+r_{l+1})\\
&\qquad \ge (\max_{i=1,\dots, 2l} r_i)^2\lp \prod^{2l}_{j=1}r_j\rp'', 
\end{split}
\end{equation}
where $(\prod .)''$ means that we drop both the maximal and the minimal factors in the product.
\end{lemma}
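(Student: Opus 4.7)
The plan is to interpret the left-hand side of \eqref{e19} as a product over the edges of a natural cycle, and to deduce the inequality from the pointwise trivial bound $(r_i+r_j) \geq r_i$ (or $\geq r_j$) applied via a carefully chosen selection of endpoints.

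The first step is to recognize that the $2l$ factors on the LHS are precisely the edges of the cycle $C_{2l}$ on the vertex set $\{r_1,\ldots,r_{2l}\}$, obtained by gluing the two chains at their common endpoints $r_1$ and $r_{l+1}$: the vertices appear in the cyclic order
$$
r_1,\, r_2,\, \ldots,\, r_l,\, r_{l+1},\, r_{2l},\, r_{2l-1},\, \ldots,\, r_{l+2},\, r_1.
$$
Indeed, the ``chain~1'' factors $(r_1+r_2)(r_2+r_3)\cdots(r_l+r_{l+1})$ trace the first half of this cycle, while the ``chain~2'' factors $(r_1+r_{l+2})(r_{l+2}+r_{l+3})\cdots(r_{2l}+r_{l+1})$ close it up. So the LHS equals $\prod_{e\in E(C_{2l})}(r_i+r_j)$ where $e=\{r_i,r_j\}$.

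Let $r_M$ be attained at some position $i^*$ and $r_m$ at position $j^*$; the case $r_M=r_m$ (all entries equal) is trivial. The target inequality can be rewritten as
$$
\prod_{e\in E(C_{2l})}(r_i+r_j) \;\geq\; r_{i^*}^{\,2}\cdot\prod_{k\neq i^*,\,j^*} r_k,
$$
so it suffices to pick one endpoint from each of the $2l$ edges of $C_{2l}$ in such a way that $r_{i^*}$ is chosen twice, $r_{j^*}$ is not chosen at all, and each remaining vertex is chosen exactly once. To produce this selection I would first pick $r_{i^*}$ from each of the two edges incident to it, accounting for the factor $r_{i^*}^{\,2}=r_M^{\,2}$. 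Removing $r_{i^*}$ from $C_{2l}$ leaves a path $P$ with $2l-1$ vertices and $2l-2$ edges, and on $P$ I would apply the elementary fact that on any path $v_1-v_2-\cdots-v_n$, to avoid a designated vertex $v_k$ it suffices to pick the left endpoint from every edge strictly to the left of $v_k$ and the right endpoint from every edge strictly to the right of $v_k$; this covers each $v_i$ with $i\neq k$ exactly once. Taking $v_k=r_{j^*}$ on $P$ furnishes the desired selection, and multiplying the bounds $(r_i+r_j)\geq r_{\text{chosen}}$ over all edges yields \eqref{e19}.

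The main obstacle is conceptual rather than computational: it is the recognition that the factors in \eqref{e19}, presented as two chains meeting at shared endpoints, in fact form a single simple cycle of even length $2l$. Once this is seen, the endpoint-selection argument is essentially forced, and no arithmetic beyond $a+b\geq \max(a,b)$ is needed.
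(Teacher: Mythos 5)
Your proof is correct. The mechanism is the same one the paper uses: bound each factor $(r_i+r_j)$ from below by one of its two summands, choosing the summands so that the maximum is used twice, the minimum never, and every other variable exactly once. The difference is in how this selection is produced. The paper works directly with the two chains meeting at $r_1$ and $r_{l+1}$, reduces by symmetry, and then runs a four-case analysis according to whether the maximum and the minimum lie in $\{r_1,r_{l+1}\}$ or among the interior variables, exhibiting an explicit selection in each case. You instead observe that the $2l$ factors are exactly the edges of the cycle $C_{2l}$ obtained by gluing the two chains at their common endpoints, pick the maximal vertex from both of its incident edges, and then select endpoints along the remaining path so as to skip the minimal vertex; this single rule covers all configurations at once and removes the case analysis (including the slightly delicate boundary situations where the minimum is adjacent to the maximum, which your path rule handles automatically since the skipped vertex may be an endpoint of the path). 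One small wording point: ``edges strictly to the left/right of $v_k$'' should be read as edges $\{v_i,v_{i+1}\}$ with $i<k$, respectively $i\ge k$, so that the two edges incident to $v_k$ contribute $v_{k-1}$ and $v_{k+1}$; with that reading every edge is assigned exactly one endpoint and the count matches. Note also that the cycle picture requires $l\ge 2$, which is the only case in which the statement is actually invoked (the paper applies it with $l=2m$).
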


Suppose that we have $r_{i_1}> r_{i_2}> \dots r_{i_{2l}}>0$. Then the lemma says that
\begin{equation*}
\begin{split}
&(r_1+r_2)\lp \prod^{l-1}_{j=2}(r_j+r_{j+1})\rp\, (r_l+r_{l+1})\cdot
(r_1+r_{l+2})\lp \prod^{2l-1}_{j=l+2}(r_j+r_{j+1})\rp\, (r_{2l}+r_{l+1})\\
&\qquad \ge r_{i_1}^2 \lp\prod^{2l-1}_{j=2}r_{i_j}\rp.
\end{split}
\end{equation*}

\begin{proof}
We can assume that $r_j>0, \ j=1,\dots, 2l$. Consider the decomposition of the set $\{r_1,\ldots,r_{2l}\}$ into two disjoint subsets $G_1, G_2$, where
$$
G_1=\{r_1,r_{l+1}\}, \quad G_2=\{(r_j)_{j=2,\dots, l}, \ (r_j)_{j=l+2,2l}\}.
$$
Using the symmetry of variables $r_j$ within each group, one can reduce the general situation to the analysis of four cases only: 1)
$\max_{j=1,\dots, 2l} r_j\in G_1,\ \min_{j=1,\dots, 2l} r_j\in G_1$; 2) $\max_{j=1,\dots, 2l} r_j\in G_1,\ \min_{j=1,\dots, 2l} r_j\in G_2$; 3) $\max_{j=1,\dots, 2l} r_j\in G_2,$ $\min_{j=1,\dots, 2l}$ $r_j\in G_1$; 4) 
$\max_{j=1,\dots, 2l} r_j\in G_2, \min_{j=1,\dots, 2l} r_j\in G_2$.

\emph{Case 1:} Without loss of generality, suppose $r_1=\max_{j=1,\dots, 2l} r_j\in G_1,$ $r_{l+1}=$ $\min_{j=1,\dots, 2l} r_j\in G_1$. We have
\begin{eqnarray*}
&&(r_1+r_2)(r_2+r_3)\dots (r_l+r_{l+1})\cdot (r_1+r_{l+2})(r_{l+2}+r_{l+3})\dots (r_{2l}+r_l)\\
&&\ge r_1 r_2\dots r_l\cdot r_1 r_{l+2}\dots r_{2l}=r_1^2\, \prod^{2l}_{j=2, j\not=l+1} r_j.
\end{eqnarray*}

\emph{Case 2:} WLOG $r_1=\max_{j=1,\dots, 2l} r_j\in G_1,\ r_2=\min_{j=1,\dots, 2l} r_j\in G_2$. 
Then
\begin{eqnarray*}
&&(r_1+r_2)(r_2+r_3)\dots (r_l+r_{l+1})\cdot (r_1+r_{l+2})(r_{l+2}+r_{l+3})\dots (r_{2l}+r_l)\\
&&\ge r_1 r_3\dots r_l r_{l+1}\cdot r_1 r_{l+2}\dots r_{2l}=r_1^2\, \prod^{2l}_{j=3} r_j.
\end{eqnarray*}

\emph{Case 3:} WLOG $r_2=\max_{j=1,\dots, 2l} r_j\in G_2, \ r_1=\min_{j=1,\dots, 2l} r_j\in G_1$. We have
\begin{eqnarray*}
&&(r_1+r_2)(r_2+r_3)\dots (r_l+r_{l+1})\cdot (r_1+r_{l+2})(r_{l+2}+r_{l+3})\dots (r_{2l}+r_l)\\
&&\ge r_2 r_2 r_3\dots r_l\cdot r_{l+2} r_{l+3}\dots r_{2l} r_{l+1}=r_2^2\, \prod^{2l}_{j=3} r_j.
\end{eqnarray*}

\emph{Case 4:} WLOG $r_2=\max_{j=1,\dots, 2l} r_j\in G_2,\ r_3=\min_{j=1,\dots, 2l} r_j\in G_2$. We get
\begin{eqnarray*}
&&(r_1+r_2)(r_2+r_3)\dots (r_l+r_{l+1})\cdot (r_1+r_{l+2})(r_{l+2}+r_{l+3})\dots (r_{2l}+r_l)\\
&&\ge r_2 r_2 r_4\dots r_{l+1}\cdot r_1 r_{l+2}\dots r_{2l}=r_2^2\, \prod^{2l}_{j=1, j\not=2,3} r_j.
\end{eqnarray*}
The proof is finished.
\end{proof}

\begin{proposition}\label{p7} The following assertions take place:
\begin{enumerate}
\item We have
\begin{equation}\label{e20}
\begin{split}
&||D^m||^2_{\css_2}\le C\underbrace{\int^\dh_0\dots\int^\dh_0}_{4m}
\frac{\prod^{4m}_{j=1} \psi_0(r_j)^2}{(r_1+r_2)^2 \lp\prod^{2m-1}_{j=2}(r_j+r_{j+1})\rp (r_{2m}+r_{2m+1})}\\
&\hspace{2,5cm} \frac 1{(r_1+r_{2m}) \lp\prod^{4m-1}_{j=2m+2}(r_j+r_{j+1})\rp (r_{4m}+r_{2m+1})}\, \prod^{4m}_{j=1}dr_j.
\end{split}
\end{equation}
\item In particular, $D^m\in \css_2$ (or, equivalently, $D\in \css_{2m}$) whenever $\g>1/(8m)$.
\end{enumerate}
\end{proposition}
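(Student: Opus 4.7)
\emph{Step 1: Setting up the Hilbert--Schmidt norm.} The plan is to bound $||D^m||_{\css_2}^2$ directly from the kernel estimate \eqref{e18} and then extract convergence by a simplex decomposition. I would write
\begin{equation*}
||D^m||_{\css_2}^2 = \int_\bd\int_\bd |D_m(z_1,z_{2m+1})|^2\, dA(z_1)\, dA(z_{2m+1})
\end{equation*}
and insert the square of the pointwise bound from Proposition \ref{p6}. Squaring an iterated integral produces two independent copies of the $(2m-1)$-fold inner integration; I would relabel the interior variables of the second copy as $r_{2m+2},\dots,r_{4m}$. A polar change of variables $z_1 = 1-r_1 e^{-i\th_1}\in\shl$ and $z_{2m+1}=1-r_{2m+1}e^{-i\th_{2m+1}}\in\shl$ integrates the angles to a constant and yields Jacobians $r_1, r_{2m+1}$. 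Absorbing these into the first links of the two chains via trivial bounds of the form $r_k/(r_k+r_j)\le 1$ produces precisely \eqref{e20}; the exterior factors $\psi_0(r_1)$ and $\psi_0(r_{2m+1})$ are raised to the second power because they appear in both kernel copies.

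\emph{Step 2: Simplex decomposition and Lemma \ref{l4}.} For the membership assertion I would decompose $(0,\dh)^{4m}$ into the $(4m)!$ simplices
\begin{equation*}
\ccs_{i_1,\dots,i_{4m}} = \{(r_1,\dots,r_{4m})\in(0,\dh)^{4m}: r_{i_1}>r_{i_2}>\cdots>r_{i_{4m}}>0\},
\end{equation*}
indexed by permutations of $\{1,\dots,4m\}$, and estimate the integral \eqref{e20} on each one separately. The product of the two chains in the denominator of \eqref{e20} matches the LHS of \eqref{e19} with $l=2m$ (the shared endpoint being $r_{2m+1}=r_{l+1}$), so Lemma \ref{l4} furnishes the lower bound $r_{i_1}^2\prod_{j=2}^{4m-1}r_{i_j}$ valid throughout $\ccs_{i_1,\dots,i_{4m}}$. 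Since $\psi_0$ is monotone increasing on $(0,\dh)$, on the same simplex I would bound $\prod_{j=1}^{4m}\psi_0(r_j)^2 \le \psi_0(r_{i_1})^{8m}$, replacing each of the $4m$ factors by its largest value.

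\emph{Step 3: Iterated integration and the $\g > 1/(8m)$ criterion.} The integral on $\ccs_{i_1,\dots,i_{4m}}$ is then dominated by
\begin{equation*}
\int_0^\dh \frac{\psi_0(r_{i_1})^{8m}}{r_{i_1}^2}\,dr_{i_1}\int_0^{r_{i_1}}\frac{dr_{i_2}}{r_{i_2}}\cdots\int_0^{r_{i_{4m-2}}}\frac{dr_{i_{4m-1}}}{r_{i_{4m-1}}}\int_0^{r_{i_{4m-1}}} dr_{i_{4m}}.
\end{equation*}
The innermost integration yields $r_{i_{4m-1}}$, after which the telescoping identity $\int_0^a r^{-1}\cdot r\, dr = a$ collapses each successive layer, leaving $\int_0^\dh \psi_0(r)^{8m}\, r^{-1}\,dr$. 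Via the substitution $u = \log(1/r)$, this integral reduces to $\int^\infty (1+u)^{-8m\g}\, du$, which converges if and only if $8m\g>1$. Summing the $(4m)!$ simplex contributions (a $\g$-independent combinatorial factor) therefore establishes $D^m \in \css_2$, equivalently $D\in\css_{2m}$, whenever $\g>1/(8m)$. The delicate point is aligning the indexing of the two chains in \eqref{e20} with the hypothesis of Lemma \ref{l4}; once this is in place, the telescoping integration and the logarithmic substitution are routine.
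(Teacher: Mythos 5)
Your proposal is correct and takes essentially the same route as the paper: square the kernel bound \eqref{e18} to obtain the $4m$-fold integral of point (1), cut the cube $(0,\dh)^{4m}$ into simplexes, invoke Lemma \ref{l4} with $l=2m$ for the two chains sharing the endpoints $r_1$ and $r_{2m+1}$, and collapse the iterated integrals to $\int_0^\dh \psi_0(r)^{8m}\, r^{-1}\, dr$, convergent exactly when $8m\g>1$. The only (harmless) deviations are cosmetic: you majorize all factors $\psi_0(r_j)^2$ by $\psi_0(r_{i_1})^{8m}$ at the outset instead of accumulating the power through the telescoping as the paper does, and the Jacobians $r_1, r_{2m+1}$ are best simply bounded by a constant (rather than cancelled against the first links), so that the full chains remain intact for Lemma \ref{l4} — which is what your Step 2 in fact uses.
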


\begin{proof}
The proof of the proposition is lengthy, but rather elementary. For the convenience of the reader, we first treat the case $m=1$, and then go to the general $m$.

\emph{First}, let $m=1$. Recall the notation introduced in Proposition \ref{p6}. Relation \eqref{e18} says that
\begin{equation}\label{e201}
|D(z_1,z_3)|\le C\int^\dh_0 \frac{\tc_\shl(z_1)\psi_0(r_2)^2\tc_\shl(z_3)}{(r_1+r_2)(r_2+r_3)}\, dr_2,\quad z_1,z_3\in\bd.
\end{equation}
 We continue as
\begin{eqnarray}
I_1&:=&||D||^2_{\css_2}=\int_{\bd, z_1}\int_{\bd, z_3} |D(z_1,z_3)|^2\, dA(z_1)dA(z_3) \nn\\
&=&\int_{\shl, z_1}\int_{\shl, z_3} |D(z_1,z_3)|^2\, dA(z_1)dA(z_3). \label{e21}
\end{eqnarray}
Now, we pass to the polar coordinates translated to point $z_0=1$
$$
z_1=1-r_1e^{i\th_1}, \ z_3=1-r_3e^{i\th_3}
$$
exactly as we did in Proposition \ref{p6}. Here, $r_j\in (0,\dh),\ \th_j\in (0,\pi/2), \ j=1,3$. We replace each factor $|D(z_1,z_3)|$ in the integal on RHS of \eqref{e21} by the integral bound given in \eqref{e18}. Hence, the quantity in the RHS of \eqref{e21} is written as a 4-tuple integral, and it is estimated above as
\begin{equation}\label{e22}
I_1 \le C\underbrace{\int^\dh_{0,r_1}\dots \int^\dh_{0,r_4}}_{4}
\frac{\prod_{j=1}^4 \psi_0(r_j)^2}{(r_1+r_2)(r_2+r_3)\cdot (r_1+r_4)(r_4+r_3)}\, \prod^4_{j=1}dr_j.
\end{equation}
This is point (1) of the proposition for $m=1$.

As already mentioned, we cut the cube $\ccs=\ccs(\dh)=(0,\dh)^4$ in simplexes 
$$
\ccs_{i_1\dots i_4}=\{(r_1,r_2,r_3,r_4): \dh>r_{i_1}>\dots r_{i_4}>0\},
$$
where $i_k=1,\dots, 4, i_k\not=i_j, k\not=j, \ k,j=1,\dots,4$. Lemma \ref{l4} with $l=2$ says trivially that
$$
(r_1+r_2)(r_2+r_3)\cdot (r_1+r_4)(r_4+r_3)\ge r_{i_1}^2 r_{i_2}r_{i_3}.
$$ 
Consequently, the upper bound for integral \eqref{e22} on the simplex $\ccs_{i_1\dots i_4}$ reads as
\begin{eqnarray}\label{e23}
&&\int_{\ccs_{i_1\dots i_4}}\frac{\prod_{j=1}^4 \psi_0(r_j)^2}{(r_1+r_2)(r_2+r_3)\cdot (r_1+r_4)(r_4+r_3)}\, \prod^4_{j=1}dr_j \\
&\le& C \int^\dh_0 dr_{i_1}\, \frac{\psi_0(r_{i_1})^2}{r^2_{i_1}}\, \int^{r_{i_1}}_0 dr_{i_2}\,\frac{\psi_0(r_{i_2})^2}{r_{i_2}} \dots \int^{r_{i_3}}_0 dr_{i_4}\, \psi_0(r_{i_4})^2 \nn\\
&\le& C \int^\dh_0 dr_{i_1}\, \frac{\psi_0(r_{i_1})^2}{r^2_{i_1}}\, \int^{r_{i_1}}_0  dr_{i_2}\, \frac{\psi_0(r_{i_2})^2}{r_{i_2}}\, \int^{r_{i_2}}_0 dr_{i_3}\, \frac{\psi_0(r_{i_3})^4 r_{i_3}}{r_{i_3}} \nn\\
&\le& C \int^\dh_0 dr_{i_1}\, \frac{\psi_0(r_{i_1})^2}{r^2_{i_1}}\, \int^{r_{i_1}}_0 dr_{i_2}\, \frac{\psi_0(r_{i_2})^6 r_{i_2}}{r_{i_2}} \nn\\
&\le& C \int^\dh_0 dr_{i_1}\, \frac{\psi_0(r_{i_1})^8 r_{i_1}}{r^2_{i_1}}=C \int^\dh_0 dr_{i_1}\, \frac{\psi_0(r_{i_1})^8}{r_{i_1}} \nn
\end{eqnarray}
where we used that $\psi_0(r_{i+1})\le \psi_0(r_i)$ since $r_{i+1}\le r_i$ and the function $\psi_0$ is increasing.
The latter integral is convergent whenever $8\g>1$, which is point (2) of the current proposition with $m=1$.

\emph{Second}, we follow the lines of the proof for $m=1$ in the general case, but we use some combinatorics. We get
\begin{eqnarray*}
I_m&:=&||D^m||^2_{\css_2}=\int_{\bd, z_1}\int_{\bd, z_{2m+1}} |D_m(z_1,z_{2m+1})|^2\, dA(z_1)dA(z_{2m+1}) \nn\\
&=&\int_{\shl, z_1}\int_{\shl, z_{2m+1}} |D_m(z_1,z_{2m+1})|^2\, dA(z_1)dA(z_{2m+1}). 
\end{eqnarray*}
Now, we bound $|D(z_1,z_{2m+1})|^2$ from above by the product of expressions from \eqref{e18}. So, we come to a $4m$-tuple integral
\begin{eqnarray*}
&&I_m\le C\underbrace{\int^\dh_0\dots\int^\dh_0}_{4m}
\frac{\prod^{4m}_{j=1} \psi_0(r_j)^2}{(r_1+r_2) \lp\prod^{2m-1}_{j=2}(r_j+r_{j+1})\rp (r_{2m}+r_{2m+1})}\\
&&\qquad\qquad \frac 1{(r_1+r_{2m}) \lp\prod^{4m-1}_{j=2m+2}(r_j+r_{j+1})\rp (r_{4m}+r_{2m+1})}\, \prod^{4m}_{j=1}dr_j,
\end{eqnarray*}
the computation being completely analogous to \eqref{e22}. Point (1) of the proposition is proved in the general case.

We now divide the cube $\ccs=\ccs(\dh)=(0,\dh)^{4m}$ in simplexes 
$$
\ccs_{i_1\dots i_{4m}}=\{(r_1,\dots,r_{4m}): \dh>r_{i_1}>\dots >r_{i_{4m}}>0\},
$$
where $i_k=1,\dots, 4m, i_k\not=i_j, k\not=j, \ k,j=1,\dots, 4m$. Lemma \ref{l4} ($l=2m$) gives
\begin{equation*}
\begin{split}
&\lp \prod^{2m}_{j=1}(r_j+r_{j+1})\rp \cdot
(r_1+r_{2m+2})\lp \prod^{4m-1}_{j=2m+2}(r_j+r_{j+1})\rp (r_{4m}+r_{2m+1})\\
&\ge r^2_{i_1}\lp\prod^{4m-1}_{j=2} r_{i_j}\rp.
\end{split}
\end{equation*}
Consequently, the upper bound for the integral on $\ccs_{i_1\dots i_{4m}}$ is
\begin{eqnarray*}
&&\int_{\ccs_{i_1\dots i_{4m}}}\dots \le 
C \int^\dh_0 dr_{i_1}\, \frac{\psi_0(r_{i_1})^2}{r^2_{i_1}}\, \int^{r_{i_1}}_0 dr_{i_2}\, \frac{\psi_0(r_{i_2})^2}{r_{i_2}}\, \dots \int^{r_{i_{4m-1}}}_0 dr_{i_{4m}}\, \psi_0(r_{i_{4m}})^2.
\end{eqnarray*}
As before, we have $\psi_0(r_{i+1})\le \psi_0(r_i)$ since $\psi_0$ is increasing and $0<r_{i+1}\le r_i$. We bound this integral by telescoping as in \eqref{e23} to obtain
\begin{eqnarray*}
&& \int_{\ccs_{i_1\dots i_{4m}}}\dots \le C \int^\dh_0 dr_{i_1}\, \frac{\psi_0(r_{i_1})^{8m} r_{i_1}}{r^2_{i_1}}=C \int^\dh_0 dr_{i_1}\, \frac{\psi_0(r_{i_1})^{8m}}{r_{i_1}}.
\end{eqnarray*}
The latter integral is convergent whenever $8m\g>1$, which is point (2) of the current proposition. The proof is finished.
\end{proof}

\nt
{\it Proof of Theorem \ref{t20}.} 
Indeed, fix a natural $m>0$ such that $\g>1/(8m)$. Recall that
$$
(T^*_{\o_1}T_{\o_L})^*(T^*_{\o_1}T_{\o_L})\le D,
$$
and, by Proposition \ref{p7}, the compact positive operator $D$ lies in $\css_{2m}\subset \ss^0_{2\g}$. By monotonicity of singular values, the operator  $T^*_{\o_1}T_{\o_L}$ is in $\ss^0_{2\g}$ as well, and the theorem is proved. \hfill$\Box$

\section{Applications and concluding remarks}\label{s5}
The corollaries presented in this Section mainly follow Pushnitski \cite{pu1}. Once again, remind the notation for functions $\p_0,\p_1, \p$, \eqref{e0041}, \eqref{e0042}.

\subsection{Different verisons of the obtained results}\label{s51}

\begin{corollary}\label{c1}\hfill

\begin{enumerate}
\item Let $\p$ be a function continuous on $\bar\bd$ and having the property that
$$
\lim_{|z|\to1-0} |\p_{0,\g}(|z|)^{-1}\p(z)-\p_1(e^{i\th})|=0, \quad z=|z|e^{i\th}\in\bd,
$$
for some $\p_1\in C(\bt)$. Then the singular values of operator $T_\p$ have the asymptotics \eqref{e005}, \eqref{e0051}.
\item In particular, let 
$$\p(z)=\p_1(e^{i\th})\p_{0,\g}(|z|)\, g(|z|),$$
where $g\in L^\infty[0,1)$ and  $g(1):=\lim_{r\to1-0} g(r)$. Then 
$$
\lim_{n\to+\infty} \log(n+1)^\g s_n(T_\p)=\lp |g(1)|\,||\p_1||_{L^\infty(\bt)}\rp^\g,
$$
idem for the counterpart of formula \eqref{e0051}.
\end{enumerate}
\end{corollary}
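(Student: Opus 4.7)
The strategy is to reduce both parts of Corollary~\ref{c1} to Theorem~\ref{t1} via a perturbative argument driven by the Ky--Fan-type Proposition~\ref{p2}. I would write $\p = \psi + \rho$ with $\psi(z) := \p_{0,\g}(|z|)\,\p_1(e^{i\th})$ and $\rho := \p - \psi$. Theorem~\ref{t1} then supplies $\ddg(T_\psi) = \dg(T_\psi) = \|\p_1\|_\lni^\gi$, so by Proposition~\ref{p2} it is enough to establish $T_\rho \in \sgo$: at that point $\ddg(T_\p) = \dg(T_\p) = \|\p_1\|_\lni^\gi$, which by the equivalence of \eqref{e301}--\eqref{e302} is precisely \eqref{e005}--\eqref{e0051}.

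To produce $T_\rho \in \sgo$, I would use the hypothesis together with the continuity of $\p$ on $\bar\bd$ and the compactness of $\bt$ to extract, for every $\ep > 0$, some $\d > 0$ such that $|\rho(z)| \le \ep\, \p_{0,\g}(|z|)$ whenever $|z| > 1-\d$ (the stated limit is in fact uniform in $\th$). I then split $\rho = \rho_1 + \rho_2$ along $\chi_{\{|z|\le 1-\d\}}$ and $\chi_{\{|z|>1-\d\}}$. The symbol of $T_{\rho_1}$ is compactly supported in $\bd$, so as at the start of Section~\ref{s41} the explicit form \eqref{e0012} of $P_+$ gives exponential decay of the singular values of $T_{\rho_1}$; therefore $T_{\rho_1} \in \bigcap_{p>0}\css_p \subset \sgo$ by Proposition~\ref{p3}(1).

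The main technical step is an extension of Lemma~\ref{l2} valid beyond product symbols. For $f \in L^2_a(\bd)$,
$$
\|T_{\rho_2} f\|^2 = \|P_+(\rho_2 f)\|^2 \le \|\rho_2 f\|^2 \le \ep^2 \int_\bd \p_{0,\g}(|z|)^2 |f(z)|^2\, dA(z) = \ep^2\,\langle T_{\p_{0,\g}^2} f, f\rangle,
$$
which yields the operator inequality $T_{\rho_2}^* T_{\rho_2} \le \ep^2\, T_{\p_{0,\g}^2}$ on $L^2_a(\bd)$. Since $\p_{0,\g}^2 = \p_{0,2\g}$, Lemma~\ref{l02} applied with exponent $2\g$ gives $s_n(T_{\p_{0,\g}^2}) = (\log(n+1))^{-2\g}(1+o(1))$; min--max monotonicity of singular values for positive operators therefore produces $s_n(T_{\rho_2}) \le \ep\,(\log(n+1))^{-\g}(1+o(1))$, whence $\ddg(T_{\rho_2}) \le \ep^\gi$. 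Applying Proposition~\ref{p2} to $T_\rho = T_{\rho_1} + T_{\rho_2}$ (using $T_{\rho_1} \in \sgo$) gives $\ddg(T_\rho) \le \ep^\gi$, and sending $\ep \to 0$ shows $T_\rho \in \sgo$, completing part~(1).

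Part~(2) is then a direct consequence: $\p_{0,\g}(|z|)^{-1}\p(z) = \p_1(e^{i\th})\,g(|z|)$ converges uniformly in $\th$, as $|z| \to 1-$, to $g(1)\,\p_1(e^{i\th})$, so part~(1) applies with $\p_1$ replaced by $g(1)\p_1$, and the constant in the asymptotic is $\|g(1)\p_1\|_\lni = |g(1)|\cdot\|\p_1\|_\lni$. The main obstacle throughout is exactly the generalization of Lemma~\ref{l2} from product symbols to symbols merely dominated by $\ep\,\p_{0,\g}$; the pointwise majorant $|\rho_2|^2 \le \ep^2\,\p_{0,\g}^2$ and the resulting operator inequality handle it cleanly, provided the reference asymptotic for the purely radial Toeplitz operator $T_{\p_{0,\g}^2}$ at exponent $2\g$ is taken from Lemma~\ref{l02}.
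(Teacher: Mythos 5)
Your argument is correct, and it is the natural completion of what the paper leaves out: for this corollary the paper gives no proof at all, saying only that part (1) is ``similar to the reasoning of Section 3'' and that part (2) follows from part (1). Your decomposition $\p=\p_1\p_{0,\g}+\rho$, the exponential-decay treatment of the piece of $\rho$ supported in $\{|z|\le 1-\d\}$ (exactly as in Step 1 of Section \ref{s4}), and the final appeal to the Ky-Fan-type Proposition \ref{p2} are precisely the Section 3 toolkit; the one genuinely new ingredient you supply, and the right one, is the handling of the boundary remainder: from $|\rho_2|\le\ep\,\p_{0,\g}$ you get $T_{\rho_2}^*T_{\rho_2}\le\ep^2\,T_{\p_{0,\g}^2}$ with $\p_{0,\g}^2=\p_{0,2\g}$, so Lemma \ref{l02} at exponent $2\g$ plus min-max monotonicity give $\ddg(T_{\rho_2})\le\ep^\gi$ --- this is exactly the extension of Lemma \ref{l2} from product symbols to symbols merely dominated by $\ep\p_0$ that the omitted proof needs. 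Two minor points: in part (2) the function $\p$ need not be continuous on $\bar\bd$ when $g$ is only in $L^\infty[0,1)$, so strictly speaking part (1) as stated does not apply, but your proof of part (1) only uses $\p\in L^\infty(\bd)$, so nothing is lost; and the constant you obtain, $|g(1)|\,\|\p_1\|_\lni$ without an outer power $\g$, is the one consistent with \eqref{e005} and Theorem \ref{t1} --- the exponent $\g$ in the printed statement of part (2) appears to be a typo rather than a defect of your argument.
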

The proof of the first claim of the corollary is similar to the reasoning of Section 3 and it is omitted. The second claim is an easy consequence of the first one. 

The general operator-theoretic techniques developed in Pushnitski-Yafaev \cite{puy} permit one to treat the sequences of positive and negative eigen-values $\{\l^\pm_n(T_\p)\}_n$ of operator $T_\p$ with a real symbol $\p$. Recall that $\p^\pm_1=\max\{\pm\p_1,0\}$. For simplicity, we give the following corollary for a symbol $\p$ given in \eqref{e0041}, \eqref{e0042}.
\begin{corollary}\label{c2}
Let $\p$ be a real symbol as above. We have the following asymptotics for the sequences of positive and negative eigenvalues of operator $T_\p$, respectively
$$
\lim_{n\to+\infty} \log(n+1)^\g \l^\pm_n(T_\p)=||\p^\pm_1||_{L^\infty(\bt)}^\g.
$$
\end{corollary}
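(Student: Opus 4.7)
The approach is to combine Theorem~\ref{t1}, applied to the positive and negative parts of the symbol, with the operator-theoretic framework developed in Pushnitski--Yafaev \cite{puy}. Since $\pon$ is real, $\p=\pon\po$ is real-valued and $T_\p$ is self-adjoint. Write $\pon=\pon^+-\pon^-$ with $\pon^\pm=\max\{\pm\pon,0\}\in C(\bt)$ (non-negative, with disjoint open supports in $\bt$), and set $T_\pm:=T_{\pon^\pm\po}$, which are compact positive self-adjoint operators satisfying $T_\p=T_+-T_-$. Since $\l_n(T_\pm)=s_n(T_\pm)$, Theorem~\ref{t1} applied to each of them gives
$$
\lim_{n\to+\infty}(\log(n+1))^\g\l_n(T_\pm)=||\pon^\pm||_\lni,
$$
so in particular $T_\pm\in\sg$. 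The one-sided upper bound $\limsup_n (\log(n+1))^\g \l_n^\pm(T_\p)\leq ||\pon^\pm||_\lni$ is immediate: $T_-\geq 0$ gives $T_\p\leq T_+$ in the operator order, whence $\l_n^+(T_\p)\leq s_n(T_+)$ by the min--max principle, and symmetrically for the negative part.

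For the matching lower bound, one first treats a step-function symbol $\ti\pon=\ti\pon^+-\ti\pon^-=\sum_j c_j^+\chi_{I_j^+}-\sum_k c_k^-\chi_{I_k^-}$ with $c_j^\pm>0$, where the arcs in $\{I_j^+\}\cup\{I_k^-\}$ are chosen pairwise disjoint (possible since $\{\pon>0\}$ and $\{\pon<0\}$ are disjoint open subsets of $\bt$). The self-adjoint operators $c_j^+ T_{\chi_{I_j^+}\po}$ and $-c_k^- T_{\chi_{I_k^-}\po}$ are then pairwise asymptotically orthogonal: for strictly separated arcs, Lemma~\ref{l3} even gives Hilbert--Schmidt cross products, and Theorem~\ref{t20} handles the remaining touching cases. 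A $\pm$-analog of Proposition~\ref{p4}, established by applying the same diagonal-embedding trick to the block model $A_0:=\diag\{c_j^+T_{\chi_{I_j^+}\po},\,-c_k^-T_{\chi_{I_k^-}\po}\}_{j,k}$, whose positive (resp.\ negative) eigenvalues are exactly the union of eigenvalues of the positive (resp.\ negative) diagonal entries, then yields
$$
(\log(n+1))^\g\, \l_n^\pm(T_{\ti\pon\po})\to\max_j c_j^\pm=||\ti\pon^\pm||_\lni.
$$
Finally, a $\pm$-version of the Ky-Fan estimate of Proposition~\ref{p1}, combined with Lemma~\ref{l2} applied to $T_{(\pon-\ti\pon)\po}$, allows one to pass from $\ti\pon$ to $\pon$ by sending $||\pon-\ti\pon||_\lni\to 0$.

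The main obstacle is the self-adjoint, signed version of Proposition~\ref{p4}. The symmetric version of that proposition recovers only $s_n(A)=|\l_n(A)|$ and collapses all sign information; extracting $\l_n^\pm$ separately requires an additional ingredient. Under asymptotic orthogonality $A_kA_j\in\ss^0_{2\g}$ for $j\neq k$, one has $A^2\approx\sum_k A_k^2$ modulo $\ss^0_{2\g}$; since the $A_k^2$ are themselves asymptotically orthogonal, $|A|=(A^2)^{1/2}\approx\sum_k|A_k|$ in the same scale, and hence $A^\pm=(|A|\pm A)/2\approx\sum_k A_k^\pm$, so that the positive and negative eigenvalues of $A$ inherit their asymptotics from the unions of positive/negative eigenvalues of the $A_k$'s. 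Making this square-root approximation rigorous in the logarithmic scale, rather than the Hilbert--Schmidt or trace-class scale where it is classical, is precisely where the techniques of \cite{puy} are essential.
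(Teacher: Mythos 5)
The paper offers no proof of Corollary \ref{c2} at all: it only remarks that the operator-theoretic techniques of Pushnitski--Yafaev \cite{puy} permit treating $\l^\pm_n(T_\p)$, so your proposal is the natural implementation of exactly that remark rather than a different route, and it supplies more detail than the paper does. Your ingredients are sound: the upper bound via $T_\p\le T_{\p_1^+\p_0}$ and min--max, the step-function approximation combined with Theorem \ref{t20}, and the closing approximation via the $\pm$-analogues of Proposition \ref{p1} and Lemma \ref{l2} are the same steps as in Section \ref{s3}, rerun for the counting functions of positive and negative eigenvalues. The only point you leave to \cite{puy} --- the signed analogue of Proposition \ref{p4} --- is indeed the one genuinely new ingredient, and your square-root sketch does close it: two applications of $s_n(X)^2=\l_n(X^*X)=\l_n(XX^*)$ give $s_n(|A_j||A_k|)=s_n(A_jA_k)$, so $\bigl(\sum_k|A_k|\bigr)^2-A^2\in\ss^0_{2\g}$, and a Birman--Koplienko--Solomyak type inequality, which controls individual eigenvalues of differences of square roots of positive operators, transfers this to $|A|-\sum_k|A_k|\in\ss^0_{\g}$ directly in the logarithmic scale, with no need for a Hilbert--Schmidt or trace-class setting; moreover, in your concrete application each block is a signed multiple of a positive operator, so $\sum_k A_k^\pm$ is again a sum of the type already handled by Proposition \ref{p41}. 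One cosmetic remark: your normalization $\|\p_1^\pm\|_\lni$ (without the outer power $\g$) is the one consistent with Theorem \ref{t1}; the exponent $\g$ on the right-hand side of Corollary \ref{c2} as printed is an inconsistency of the paper, not of your argument.
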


\begin{corollary}\label{c3}
Formulae \eqref{e005}, \eqref{e0051} hold for (finitely) piece-wise continuous functions $\p_1$ defined on $\bt$.
\end{corollary}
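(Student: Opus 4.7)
The plan is to approximate a piece-wise continuous symbol $\p_1$ uniformly by step functions with arbitrary (not necessarily equal-length) disjoint arcs, reducing the statement to an extension of Proposition \ref{p41}. The concluding $\vep$-argument is then identical to the one that finishes the proof of Theorem \ref{t1}.

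First, I would upgrade Lemma \ref{l1}(2) from equal-partition arcs to an arbitrary arc $I\subset\bt$ of positive length, establishing $\ddg(T_{\chi_I\p_0})=\dg(T_{\chi_I\p_0})=1$. The upper bound is immediate from Lemma \ref{l2} with $\p_1=\chi_I$, since $\|\chi_I\|_\lni=1$. For the lower bound, choose an integer $L$ large enough that some equal-partition arc $I_{j_0}$ of length $2\pi/L$ lies inside $I$ after a rotation of $\bt$ (rotation acts as a unitary on $L^2_a(\bd)$ and preserves singular values). Since $T_{\chi_I\p_0}$ and $T_{\chi_{I_{j_0}}\p_0}$ are both positive self-adjoint and $\chi_{I_{j_0}}\p_0\le\chi_I\p_0$ pointwise, the min-max principle gives $s_n(T_{\chi_{I_{j_0}}\p_0})\le s_n(T_{\chi_I\p_0})$ for every $n$; taking $s^\gi\log$ of the counting functions and passing to $\liminf$ yields $1=\dg(T_{\chi_{I_{j_0}}\p_0})\le\dg(T_{\chi_I\p_0})\le\ddg(T_{\chi_I\p_0})\le 1$.

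Second, I would extend Proposition \ref{p41} to arbitrary step functions $\ti\p_1=\sum_{j=1}^M c_j\chi_{I_j}$ with pairwise disjoint arcs $I_j$, so that $\ddg(T_{\ti\p})=\dg(T_{\ti\p})=\|\ti\p_1\|_\lni^\gi$. The asymptotic orthogonality $T^*_{\chi_{I_j}\p_0}T_{\chi_{I_k}\p_0},\, T_{\chi_{I_j}\p_0}T^*_{\chi_{I_k}\p_0}\in\ss^0_{2\g}$ for $j\ne k$ follows by rereading the proof of Theorem \ref{t20}: Step 1 on compactly supported symbols is unchanged; Step 2 on Hilbert--Schmidt cross-terms applies whenever the closures of the two arcs are disjoint; and Steps 3--4 handle the case of a shared endpoint, which after rotation is exactly the configuration analyzed in the paper (the precise arc lengths played no role in the kernel estimates of Propositions \ref{p5}--\ref{p7}, only the local geometry near the common endpoint). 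Combining with Proposition \ref{p4} and the first step, the argument of Proposition \ref{p41} goes through: the term with largest $|c_j|$ dominates $\sum_j\tn(s,c_j T_{\chi_{I_j}\p_0})$, giving the claimed value.

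Finally, for a piece-wise continuous $\p_1$ with finitely many jumps, uniform continuity on each continuous piece furnishes, for any $\vep>0$, a step function $\ti\p_1$ whose arcs refine the partition determined by the jump set and satisfy $\|\p_1-\ti\p_1\|_\lni<\vep$. Lemma \ref{l2} then gives $\ddg(T_\p-T_{\ti\p})\le\vep^\gi$, and Proposition \ref{p1} together with the second step identifies $\ddg(T_\p)$ and $\dg(T_\p)$ with $\|\p_1\|_\lni^\gi$ upon letting $\vep\to 0$, verbatim as in the conclusion of the proof of Theorem \ref{t1}. The main obstacle is really the first step: the equality $\ddg(T_{\chi_I\p_0})=1$ for a general arc does not follow directly from Theorem \ref{t1} since $\chi_I$ is discontinuous, and the monotonicity trick via a contained equal-partition subarc is what unlocks it.
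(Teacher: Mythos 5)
Your proof is correct and, at bottom, it follows the paper's own route: the paper disposes of this corollary in a single sentence, namely that piece-wise continuous functions lie in the $\lni$-closure of step functions, and then tacitly reruns the $\ep$-argument that closes the proof of Theorem \ref{t1}. The value of your write-up is that it supplies what that one-liner leaves implicit: Proposition \ref{p41} and Lemma \ref{l1}(2) are stated for the equal partition only, and equal-partition step functions cannot approximate in $\lni$ a symbol whose jumps sit at angularly incommensurable points, so one really does need the step-function asymptotics for arbitrary arcs. Your two ingredients for this are sound: for an arbitrary arc $I$, Lemma \ref{l2} gives $\ddg(T_{\chi_I\p_0})\le 1$, while $0\le T_{\chi_{I_{j_0}}\p_0}\le T_{\chi_I\p_0}$ (the difference being a Toeplitz operator with nonnegative symbol) and the min-max principle give monotonicity of eigenvalues, hence $\dg(T_{\chi_I\p_0})\ge\dg(T_{\chi_{I_{j_0}}\p_0})=1$; and the proof of Theorem \ref{t20} indeed uses only the positive distance between arcs (Step 2) or the local geometry at a shared endpoint (Steps 3--4), not the arc lengths, so asymptotic orthogonality persists for unequal arcs (for long arcs, refine the partition or localize near the common endpoint first). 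One small repair is needed in your second step: the inequality $\tn(s,c_jT_{\tc_j})\le\tn(s,c_1T_{\tc_1})$ used in Proposition \ref{p41} rests on the unitary equivalence of the equal-partition operators and may fail for arcs of different lengths; instead bound $\sum_j\tn(s,c_jT_{\chi_{I_j}\p_0})$ by $M\max_j\tn(s,c_jT_{\chi_{I_j}\p_0})$ and use that the $\limsup$ (resp. $\liminf$) of a maximum of finitely many functions is the maximum of the $\limsup$'s, together with your first step, to get $\ddg(T_{\ti\p})=\dg(T_{\ti\p})=\|\ti\p_1\|^{\gi}_\lni$. With that, your concluding approximation via Lemma \ref{l2} and Proposition \ref{p1} is exactly the paper's.
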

The last corollary follows from the fact that the piece-wise continuous finctions lie in the closure of step-functions on $\bt$ in $L^\infty(\bt)$-norm. 

\subsection{An application to finite-banded matrices}\label{s52}
Clearly enough, the obtained results allow us to handle the singular values of finite-banded matrices with loga\-rithmically decaying entries. We give a couple of definitions to make this more precise.

Let $D:\ell^2(\bz_+)\to \ell^2(\bz_+)$ be a compact operator. Its matrix is denoted by
$$
D:=[d_{i,j}]_{i,j=0,\dots,\infty}
$$
in the standard basis of $\ell^2(\bz_+)$. For a fixed $N\in\bn$, we say that $D$ is a $(2N+1)$-banded matrix, if $d_{i,j}=0$ for $|i-j|>N+1$. Furthermore, let $\{b_{-N},\dots, b_{N}\}$ be a set of $2N+1$ complex coefficients. For a fixed $\g>0$, we say that a $(2N+1)$-banded matrix $D$ has logarithmically decaying entries, if
\begin{equation*}
d_{m,m+j}=\frac{b_j}{(\log m)^\g}(1+o(1)),\ \quad m\to +\infty, \ j=-N,\dots,N.
\end{equation*}
To give the next corollary, define a specific function $\p_1$ corresponding to coefficients $\{b_j\}_{j=-N,\dots,N}$ as
$$
\p_{1,b}(e^{i\th}):=\sum^N_{j=-N} b_je^{i j\th}.
$$
\begin{corollary}\label{c4} We have the following asymptotics for the singular values of the above $(2N+1)$-banded matrix $D$
\begin{equation}\label{e25}
\lim_{n\to+\infty} (\log n)^\g s_n(D)=||\p_{1,b}||_{L^\infty(\bt)}.
\end{equation}
\end{corollary}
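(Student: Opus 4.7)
The plan is to realize the banded matrix $D$, up to an error from $\ss^0_\g$, as a Toeplitz operator covered by Theorem \ref{t1}, and then apply the Ky--Fan-type statement of Proposition \ref{p2}. Identify $\ell^2(\bz_+)$ with the Bergman space $L^2_a(\bd)$ through the standard orthonormal basis $\{e_n\}_{n\ge 0}$, $e_n(z)=\sqrt{n+1}\,z^n$. Under this identification the matrix $D=[d_{i,j}]$ acts on $L^2_a(\bd)$, and we compare it with the Toeplitz operator $T_\Phi$ corresponding to the symbol
$$
\Phi(z):=\p_{1,b}(e^{i\th})\,\p_{0,\g}(r),\qquad z=re^{i\th}\in\bd,
$$
which is continuous on $\bt$ since $\p_{1,b}$ is a trigonometric polynomial.

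The first step is to compute the matrix of $T_\Phi$ in the basis $\{e_n\}$. Using polar coordinates and orthogonality of $\{e^{ik\th}\}$, one sees that $(T_\Phi e_n,e_{n+j})=b_j\cdot 2\sqrt{(n+1)(n+j+1)}\int_0^1 r^{2n+j+1}\p_{0,\g}(r)\,dr$ for $|j|\le N$, and $(T_\Phi e_n,e_{n+j})=0$ for $|j|>N$. The same asymptotic evaluation that gave \eqref{e002} (i.e.\ Lemma \ref{l01}) yields
$$
(T_\Phi e_n,e_{n+j})=\frac{b_j}{(\log(n+1))^\g}\bigl(1+o(1)\bigr),\quad n\to+\infty,\ |j|\le N.
$$
Thus, in the basis $\{e_n\}$, $T_\Phi$ is itself a $(2N+1)$-banded matrix with exactly the same leading-order entries as $D$.

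The second step is to control the remainder $R:=D-T_\Phi$. By construction $R$ is again $(2N+1)$-banded, with entries $r_{m,m+j}=o((\log m)^{-\g})$ for $|j|\le N$. I would decompose $R=\sum_{|j|\le N}R_j$, where $R_j$ is supported on the $j$-th diagonal. Each $R_j$ equals a diagonal operator composed with the $j$-th shift; its non-zero singular values are the rearrangement of $\{|r_{m,m+j}|\}_m$, hence they satisfy $(\log(n+1))^\g s_n(R_j)\to 0$, i.e.\ $R_j\in\ss^0_\g$. Since $\ss^0_\g$ is closed under finite sums (an immediate consequence of Proposition \ref{p1}, \eqref{e51}), we conclude $R\in\ss^0_\g$.

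The final step is assembly. Theorem \ref{t1} applied to $\Phi$ gives $\lim_{n\to+\infty}(\log(n+1))^\g s_n(T_\Phi)=\|\p_{1,b}\|_{\lni}$, which by the equivalence of \eqref{e301} and \eqref{e302} means $\ddg(T_\Phi)=\dg(T_\Phi)=\|\p_{1,b}\|_{\lni}^{\gi}$. Since $D=T_\Phi+R$ with $R\in\ss^0_\g$, Proposition \ref{p2} yields $\ddg(D)=\dg(D)=\|\p_{1,b}\|_{\lni}^{\gi}$, i.e.\ $\lim_{s\to 0+}s^\gi\log\tn(s,D)=\|\p_{1,b}\|_{\lni}^{\gi}$. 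Invoking the equivalence \eqref{e301}$\Leftrightarrow$\eqref{e302} once more, this is exactly \eqref{e25}.

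The only technical obstacle is the asymptotic computation of the matrix elements of $T_\Phi$ in the basis $\{e_n\}$; all the rest reduces to routine bookkeeping of the class $\ss^0_\g$ together with the already-established Theorem \ref{t1} and Proposition \ref{p2}.
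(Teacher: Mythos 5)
Your proposal is correct and follows essentially the same route as the paper: identify $D$ with the Toeplitz operator $T_\p$, $\p=\p_{1,b}\p_{0,\g}$, in the standard Bergman basis, show $D-T_\p\in\ss^0_\g$, and conclude via Theorem \ref{t1} and Proposition \ref{p2}. The only difference is that you spell out the details the paper leaves implicit (the matrix-element asymptotics via Lemma \ref{l01} and the diagonal-by-diagonal treatment of the remainder), and these details are sound.
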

The proof of the corollary follows at once from two facts. First, the singular values of Toeplits operator $T_\p$ with $\p=\p_{1,b}\p_0$ have asymptotics \eqref{e25}. Second, we write the operator $T_\p$ in the standard basis of the Bergman space $L^2_a(\bd)$ and we identify it with the obtained matrix on $\ell^2(\bz_+)$. It is easy to see now that $D-T_\p\in \ss^0_\g$, and the claim of the corollary follows from Proposition \ref{p2}. 

\section{Appendix A: Asymptotics of a logarithmic integral}\label{s6}
The following lemma is an easy consequence of a Watson-type lemma for Laplace integrals proved in Kupin-Naboko \cite[Thm. 0.2, Cor. 2.4]{kn1}.

\begin{lemma}\label{l01} Let $g\in L^\infty[0,1)$, and $g(1):=\lim_{r\to 1-0}g(r)\not=0$. Then, for a  $\g>0$,
$$
\int^1_0 \frac {r^n}{(1+\log1/(1-r))^\g}g(r)\, dr=\frac{g(1)}{n(\log n)^\g}\lp 1+o(1)\rp.
$$
\end{lemma}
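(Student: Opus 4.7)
The plan is to reduce the integral to a Laplace transform and then invoke the Watson-type lemma from \cite{kn1}. Start by substituting $r=e^{-t}$, so that $dr=-e^{-t}dt$ and $r^n=e^{-nt}$. The integral becomes
\begin{equation*}
I_n:=\int_0^1 \frac{r^n g(r)}{(1+\log(1/(1-r)))^\g}\, dr = \int_0^\infty e^{-(n+1)t}\, h(t)\, dt,
\end{equation*}
where $h(t):=g(e^{-t})\bigl(1+\log(1/(1-e^{-t}))\bigr)^{-\g}$. Since $g$ is bounded and the denominator is bounded below by $1$, the function $h\in L^\infty(0,\infty)$, so the integral is clearly finite.

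Next, I would analyze $h(t)$ as $t\to 0+$. Using $1-e^{-t}=t(1+O(t))$ one gets
\begin{equation*}
\log\frac1{1-e^{-t}}=-\log t+O(t),\qquad 1+\log\frac1{1-e^{-t}}=(-\log t)(1+o(1)),
\end{equation*}
while $g(e^{-t})\to g(1)$ by hypothesis. Hence
$$
h(t)=\frac{g(1)}{(-\log t)^\g}\bigl(1+o(1)\bigr),\quad t\to 0+.
$$
This is precisely the slowly varying behavior at the origin required for the Watson-type lemma in \cite[Thm. 0.2, Cor. 2.4]{kn1}; applying that result to the Laplace transform $\int_0^\infty e^{-(n+1)t}h(t)\,dt$ yields
\begin{equation*}
I_n=\frac{g(1)}{(n+1)(\log(n+1))^\g}\bigl(1+o(1)\bigr)=\frac{g(1)}{n(\log n)^\g}\bigl(1+o(1)\bigr),
\end{equation*}
which is the claim.

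If one prefers a self-contained verification, the asymptotic can be obtained directly by the change of variables $u=(n+1)t$, giving
\begin{equation*}
(n+1) I_n=\int_0^\infty e^{-u}\,h\!\left(\frac{u}{n+1}\right)du,
\end{equation*}
and then
\begin{equation*}
(\log(n+1))^\g\, h\!\left(\tfrac{u}{n+1}\right)\longrightarrow g(1)\quad\text{pointwise in }u>0,
\end{equation*}
since $(-\log(u/(n+1)))/\log(n+1)=1-\log u/\log(n+1)\to 1$. The dominated convergence theorem (with the bound $|h|\le\|g\|_\infty$ majorant $\|g\|_\infty e^{-u}$) justifies passing the limit under the integral, producing $\int_0^\infty e^{-u}du=1$.

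The only delicate point is ensuring the local approximation $h(t)\sim g(1)(-\log t)^{-\g}$ can be integrated against $e^{-(n+1)t}$ with an error that is genuinely $o(1/(n(\log n)^\g))$; the contribution from $t$ bounded away from $0$ is exponentially small by boundedness of $h$, and the neighborhood of $0$ is handled either by the cited Watson-type lemma or by the dominated convergence argument above.
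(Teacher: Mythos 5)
Your first route---substituting $r=e^{-t}$ to turn the integral into the Laplace transform $\int_0^\infty e^{-(n+1)t}h(t)\,dt$ with $h(t)=g(1)(-\log t)^{-\gamma}(1+o(1))$ as $t\to0+$, and then invoking the Watson-type lemma in logarithmic scale of Kupin--Naboko --- is exactly how the paper proves the lemma (the paper offers no further detail beyond citing \cite[Thm.~0.2, Cor.~2.4]{kn1}), so that part is fine as it stands.

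The ``self-contained verification'', however, has a genuine gap as written: after the change of variables $u=(n+1)t$ you must pass to the limit in $\int_0^\infty (\log(n+1))^{\gamma} e^{-u}\,h(u/(n+1))\,du$, and the majorant you propose, $\|g\|_\infty e^{-u}$, only dominates $e^{-u}h(u/(n+1))$; it does not dominate the integrand once the factor $(\log(n+1))^{\gamma}$ is included, so dominated convergence cannot be applied in the form you state. The fix is to use the logarithmic smallness of $h$ near $0$ rather than just $|h|\le\|g\|_\infty$: since $1-e^{-t}\le t$, one has, for $u\le n+1$,
\begin{equation*}
(\log(n+1))^{\gamma}\,h\!\left(\frac{u}{n+1}\right)\le \|g\|_\infty\left(\frac{\log(n+1)}{1+\log(n+1)-\log u}\right)^{\gamma}\le \|g\|_\infty\bigl(1+\max(0,\log u)\bigr)^{\gamma},
\end{equation*}
and the same bound holds trivially for $u>n+1$ because there $h\le\|g\|_\infty$ and $(\log(n+1))^{\gamma}\le(\log u)^{\gamma}$. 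Thus $C\bigl(1+\max(0,\log u)\bigr)^{\gamma}e^{-u}$ is an $n$-independent integrable majorant (alternatively, split off the range $u\ge\sqrt{n+1}$, whose contribution is $O\bigl((\log(n+1))^{\gamma}e^{-\sqrt{n+1}}\bigr)$, and use a constant bound on the rest), after which your pointwise limit $(\log(n+1))^{\gamma}h(u/(n+1))\to g(1)$ gives the claim. A minor further point: since $g$ is only assumed bounded, the pointwise evaluation $g(e^{-u/(n+1)})\to g(1)$ is legitimate only because the hypothesis provides a genuine (not merely essential) limit of $g$ at $1$; it is worth saying this explicitly.
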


\bigskip
\nt{\bf Acknowledgments.}  The authors would like to thank Omar El-Fallah, Leonid Golinski, Karim Kellay and Alexander Pushnitski for heplful discussions.

The work is partially supported by the project ANR-18-CE40-0035. 

S. Naboko kindly acknowledges the support by RScF-20-11-20032 grant and Knut and Alice Wallenberg Foundation grant. A part of this research was done during S. Naboko's visit to University of Bordeaux in October-November, 2019. He is grateful to the University for the hospitality. 

B. Touré gratefully acknowledges the financial support coming from agreements between University of Bordeaux and the Embassy of France in Mali, 2018.

\end{document}